\newtheorem{Theorem}{Theorem}
\newtheorem{Lemma}[Theorem]{Lemma}
\newtheorem{Example}[Theorem]{Example}
\newtheorem{Remark}[Theorem]{Remark}
\newtheorem{Definition}[Theorem]{Definition}
\newtheorem{Proposition}[Theorem]{Proposition}
\numberwithin{Theorem}{section}
\newcommand{\C}{\mathbb{C}}
\newcommand{\N}{\mathbb{N}}
\newcommand{\F}{\mathbb{F}}
\newcommand{\ds}[1]{\displaystyle{#1}}
\newcommand{\mc}[1]{\mathcal{#1}} 
\newcommand{\mf}[1]{\mathfrak{#1}} 
\newcommand{\mt}[1]{\text{#1}}
\begin{document}

\title{ Bruhat Order on Partial Fixed Point Free Involutions }
\author[1]{Mahir Bilen Can}
\author[2]{Yonah Cherniavsky}
\author[3]{Tim Twelbeck}
\affil[1]{\small{Tulane University, New Orleans, USA; mcan@tulane.edu}}
\affil[2]{\small {Ariel University, Israel; yonahch@ariel.ac.il}}
\affil[3]{\small{Tulane University, New Orleans, USA; ttwelbeck@tulane.edu}}

\maketitle

\begin{abstract} 
The order complex of inclusion poset $PF_n$ of Borel orbit closures 
in skew-symmetric matrices is investigated. It is shown that $PF_n$ is an 
EL-shellable poset, and furthermore, its order complex triangulates a ball. 
The rank-generating function of $PF_n$ is computed and the resulting 
polynomial is contrasted with the Hasse-Weil zeta function of the variety of 
skew-symmetric matrices over finite fields. 
\\ \\
\textbf{Keywords:} Bruhat-Chevalley order, partial fixed-point-free involutions, 
EL-shellability, rank generating function.
\end{abstract}

\section{Introduction}

This paper is a continuation of our earlier investigations 
\cite{CanCherniavskyTwelbeck}, \cite{CanTwelbeck} 
on the Bruhat order on certain sets of involutions, 
and our notation follows these references closely:
\begin{eqnarray*}
\C &:&\ \text{field of complex numbers},\\
S_n &:&\ \text{symmetric group of $n\times n$ permutation matrices}, \\
R_n &:&\ \text{rook monoid of $n\times n$ partial permutation matrices}, \\
I_n &:&\ \text{involutions in $S_n$}, \\
F_n &:&\ \text{fixed-point-free involutions in $S_n$}, \\
PI_n &:&\ \text{partial involutions in $R_n$}, \\
\mt{Mat}_n &:&\ \text{all $n\times n$ matrices over } \C, \\
\mt{Sym}_n &:&\ \text{all $n\times n$ symmetric matrices over } \C, \\
\mt{GL}_n &:&\ \text{invertible $n\times n$ matrices over } \C,\\
\mt{B}_n &:&\ \text{Borel group of invertible upper triangular matrices from $\mt{GL}_n$.}
\end{eqnarray*}
In addition to the above list of notation, we consider 
$\mt{Skew}_n$, the space of all $n\times n$ skew-symmetric matrices over $\C$,
and $PF_n$, the set of all fixed-point-free partial involutions. The purpose of this 
article is to investigate some combinatorial properties of $PF_n$. In some sense, 
this is the final step of our program for showing that the sets of partial permutations 
$R_n, PI_n$, and $PF_n$ all share the same algebraic combinatorial properties.

Let $X$ be a variety on which a Borel group $B$ acts algebraically. 
Let $W$ denote the set of $B$-orbits in $X$, and define the {\em $B$-ordering} 
$\leq$ on $P$ by
\begin{align}\label{A:B-ordering}
\mc{O}_1 \leq \mc{O}_2 \iff \mc{O}_1 \subseteq \overline{\mc{O}_2}, \qquad  \mc{O}_1, \mc{O}_2\in W.
\end{align}
Study of this basic combinatorial set-up is important for group theory. Indeed, 
suppose $G$ is a linear algebraic group with a Borel subgroup $B$. Then the double 
cosets of $B$ in $G$ are equivalent to the orbits of $B\times B$ acting on $X=G$ via 
$(g,h)\cdot x= gxh^{-1}$. Furthermore, $B\times B$-orbits in $X$ are parametrized
by the `Weyl group' of $G$ (the {\em Bruhat-Chevalley decomposition}). 
We have a well-known special case, when $G= \mt{GL}_n$. 
Then, $\mt{B}_n\times \mt{B}_n$-orbits are parametrized by $S_n$, 
and the induced partial ordering is the {\em Bruhat-Chevalley ordering} on $S_n$.

In~\cite{Renner86}, by generalizing Bruhat-Chevalley decomposition to 
linear algebraic monoids, Renner constructs a rich family of orbit posets. 
In particular, among other things, he shows that the orbits of the Borel group action 
\begin{align}\label{A:Renner}
(g,h)\cdot A = g A h^{-1},\ g,h\in \mt{B}_n,\ A\in \mt{Mat}_n.
\end{align}
are parametrized by $R_n$. 
Basic combinatorial properties of $\mt{B}_n\times \mt{B}_n$-ordering 
on $R_n$ are investigated in~\cite{BC12}.

In~\cite{RS90}, Richardson and Springer investigate the Borel orbits
in the setting of symmetric spaces. In particular, they show that 
the set of involutions $I_n$ of $S_n$ parametrizes the Borel orbits 
in the symmetric space $\mt{SL}_n/\mt{SO}_n$, and furthermore, the corresponding 
$\mt{B}_n$-ordering on $I_n$ agrees with the restriction of the Bruhat-Chevalley ordering 
from $S_n$ (see~\cite{RS94}). Here $\mt{SL}_n$ is the special linear group and 
$\mt{SO}_n$ is its special orthogonal subgroup. Also in~\cite{RS90}, they show that 
$\mt{B}_n$-orbits in $\mt{SL}_{n}/\mt{Sp}_{n}$ are parametrized by $F_n\subset I_n$.

The monoid of matrices $\mt{Mat}_n$ can be viewed as a partial compactification of 
$\mt{GL}_n$, and similarly, the set of all symmetric matrices (respectively, set of all 
skew-symmetric matrices) can be viewed as a partial compactification of $\mt{SL}_n/\mt{SO}_n$
(respectively, of $\mt{SL}_{2n}/\mt{Sp}_{2n}$). 
Similar to the construction of $R_n$, by using suitable modifications of the method of 
Gauss-Jordan elimination, it is shown in~\cite{Szechtman07} for $X=\mt{Sym}_n$, 
and in~\cite{Cherniavsky11} for $X=\mt{Skew}_n$ that the $\mt{B}_n$-orbits of the action 
\begin{align}\label{A:B_n orbits on sym and skew}
g \cdot A = \left(g^{-1}\right)^\top Ag^{-1},\ g\in \mt{B}_n,\ A\in X
\end{align}
are parametrized by $PI_n$ and $PF_n$, respectively. 
Further combinatorial properties of the $\mt{B}_n$-ordering on $PI_n$ and on $PF_n$
are investigated by the second author in the papers~\cite{BC12} (joint with E. Bagno) 
and~\cite{Cherniavsky11}.


There is an interesting relation between $PF_{n}$ and the set of invertible involutions: 
Let $x\in PF_{n}$ be a partial fixed-point-free involution with determinant 0. 
We denote by $\tilde{x}$ the completion of $x$ to an involution in $I_{n}$ 
by adding the missing diagonal entries. For example,
$$
x =
\begin{pmatrix}
0 & 0 & 1 & 0 \\
0 & 0 & 0 & 0 \\
1 & 0 & 0 & 0 \\
0 & 0 & 0 & 0
\end{pmatrix}
\rightsquigarrow
\tilde{x} =
\begin{pmatrix}
0 & 0 & 1 & 0 \\
0 & 1 & 0 & 0 \\
1 & 0 & 0 & 0 \\
0 & 0 & 0 & 1
\end{pmatrix}.
$$
Define $\phi : PF_{n} \rightarrow I_{n}$ by setting
\begin{align}\label{bijection}
\phi(x) =
\begin{cases}
\tilde{x}\ & \ \text{if}\ x\in PF_{n}\setminus F_{n}, \\
x\ & \ \text{otherwise}.
\end{cases}
\end{align}
It is not difficult to check that $\phi$ is a bijection between 
$PF_{n}$ and $I_{n}$ such that $\phi(x) = x$ for all $x\in F_{n}$.
Now that we have two sets in bijection with corresponding $\mt{B}_n$-orderings, 
it is natural to ask for their comparison. This is one of the goals of our paper. 


Recall that the order complex $\Delta(P)$ of a poset $P$ is the abstract 
simplicial complex consisting of all chains in $P$.
Important topological information on a simplicial complex is hidden in the
orderings of its facets (which corresponds to the maximal chains in $P$).
If the facets are ordered in a way that the intersection of a facet with 
all the preceding facets is a simplicial subcomplex of codimension 1, 
then the complex is called {\em shellable}. In this case, it is known that 
the simplicial complex has the homology type of a sphere, or of a ball.
For posets, a purely combinatorial criteria for checking the shellability condition is
found by Bj\"orner in~\cite{Bjorner80}, and it is called the ``lexicographic shellability'' of $P$.

A finite graded poset $P$ with a maximum and a minimum element is called 
{\em EL-shellable},
if there exists a map $f=f_{\varGamma}: C(P) \rightarrow \varGamma$ from the
set of covering relations $C(P)$ of $P$ into a totally ordered set $\varGamma$ satisfying
\begin{enumerate}
\item in every interval $[x,y] \subseteq P$ of length $k>0$ there exists a unique saturated chain
$$\mathfrak{c}:\ x_0=x < x_1 < \cdots < x_{k-1} < x_k=y$$ such that the entries of the sequence
\begin{align}\label{JordanHolder}
f(\mathfrak{c}) = (f(x_0,x_1), f(x_1,x_2), \dots , f(x_{k-1},x_k))
\end{align}
are weakly increasing.

\item The sequence (\ref{JordanHolder}) is lexicographically smallest among all sequences of the form
$$(f(x_0,x_1'), f(x_1',x_2'), \dots , f(x_{k-1}',x_k))$$, where $x_0 < x_1' < \cdots < x_{k-1}' < x_k$.

\end{enumerate}

In literature there are different versions of this notion 
and EL-shellability is known to imply the others (see~\cite{Wachs07}).
A brief history of the shellability questions in Borel orbit posets is as follows:
In \cite{Edelman81}, Edelman proves that BC-order on $S_n$ is EL-shellable.
Shortly after, Proctor in \cite{Proctor82} shows that all classical Weyl groups are
EL-shellable.
Around the same time, in \cite{BW82}, Bj\"orner and Wachs show that Bruchat-Chevalley
ordering on all Coxeter groups, as well as on all sets of minimal-length coset representatives (quotients)
in Coxeter groups are ``dual CL-shellable'' (a weaker alternative to EL-shellability). 
A decade after the introduction of CL-shellability, in~\cite{Dyer93}, M. Dyer shows
that Bruhat-Chevalley ordering on all Coxeter groups and all quotients are EL-shellable.
As an application of EL-shellability, using Dyer's methods, in \cite{Williams07}, L. Williams shows
that the poset of cells of a cell decomposition for totally non-negative part of a flag variety
is EL-shellable.
In the papers \cite{H} and \cite{Hultman05} A. Hultman, although avoids
showing lex. shellability, obtains the same topological consequences 
for the Bruhat-Chevalley ordering on ``twisted involutions'' in Coxeter groups.

There are various directions that the results of \cite{BW82} are extended. 
For semigroups, in \cite{Putcha02}, Putcha shows that ``$J$-classes in 
Renner monoids'' are CL-shellable. In \cite{Can12}, the first author shows 
that for the special Renner monoid $R_n$, not only the $J$-classes are lex. 
shellable, but also the whole rook monoid $R_n$ is EL-shellable.
In \cite{CanTwelbeck}, the first and the third authors show that $PI_n$ 
is EL-shellable. In~\cite{CanCherniavskyTwelbeck}, we show that $F_n$ 
is also EL-shellable, and furthermore, its order complex is a ball of appropriate 
dimension.
In \cite{Incitti04}, Incitti shows that $I_n$ is EL-shellable, and in \cite{Incitti06} he shows
that the $B$-order on involutions in all classical Weyl groups are EL-shellable.

Contributing to the above literature, we show in this paper that 
$PF_n$ is an EL-shellable poset. Moreover, we show that the 
order complex of $PF_n$ triangulates a ball of dimension $n(n-1)/2$. 
On the other hand, it is known that the order complex of $I_n$ triangulates
a sphere of dimension $\lfloor n/4 \rfloor$ (see~\cite{Incitti04}, page 255).

The structure of our paper is as follows. In the next section we introduce 
basic notation for poset theory. In particular, we recollect some known,
basic facts about Bruhat-Chevalley ordering on rooks and partial involutions. 
In Subsection~\ref{S:combinatorial approach}, we compare the 
length functions of $PF_n$ and $PI_n$.

Unfortunately, $PF_n$ is not a connected subposet of $PI_n$, hence 
we are not able to directly utilize our earlier results from~\cite{CanTwelbeck}. 
Therefore, we devote all of Section~\ref{S:coverings} 
for the review of the covering relations of $I_n$, $F_n$, and of $PI_n$ 
in order for describing the covering relations of $PF_n$ next.

In Section~\ref{S:main} we present our proof of EL-shellability of $PF_n$. 
As an application of this result, in Section~\ref{S:order complex}, we 
determine the homotopy type of the order complex of the proper part of $PF_n$,
namely $PF_n$ with its smallest and the largest elements excluded. 

In the final section of our paper, we investigate the length-generating functions of 
certain subposets of $PF_n$. In particular, we relate our length generating function 
computations to the number of rational points of the variety of skew-symmetric matrices 
of fixed rank defined over a finite field.

\vspace{.5cm}

\noindent \textbf{Acknowledgement.}
The first and third authors are partially supported 
by the Louisiana Board of Regents Research and Development Grant.

\section{Preliminaries}

{\em Notation:} Let $m$ be a positive integer. We denote the set $\{1,\dots,m\}$ by $[m]$.
The rank of a matrix $x\in \mt{Mat}_n$ is denoted by $rk(x)$.

\subsection{Poset terminology}
\label{S:posetterminology}

All of our posets are assumed to be finite, graded, and furthermore, they are 
assumed to possess a minimal and a maximal element, 
denoted by $\hat{0}$ and $\hat{1}$, respectively.
We reserve the letter $P$ as the name of a generic such poset and 
denote by $\ell: P \rightarrow \N$ (or, by $\ell_P$, if needed) the length function on $P$.
The set of all covering relations in $P$ is denoted by $C(P)$. 
If $(x,y) \in C(P)$, then we write $y \rightarrow x$ to mean that $y$ covers $x$.

Recall that the M\"obius function of $P$ is defined recursively by the formula
\begin{align*}
\mu ([x,x]) &= 1,\\
\mu ([x,y]) &= - \sum_{x \leq z < y} \mu([x,z])
\end{align*}
for all $x \leq y $ in $P$.
As customary, we denote by $\Delta(P)$ the order complex of $P$. 
It is well known that $\mu(\hat{0},\hat{1})$ is equal to the ``reduced Euler characteristic'' 
$\widetilde{\chi}(\Delta(P))$ of the topological realization of $\Delta(P)$. 
See Proposition 3.8.6 in \cite{StanleyEC1}.

Let $\varGamma$ denote a finite totally ordered poset and let $g$ 
be a $\varGamma$-valued function defined on $C(P)$. 
Then $g$ is called an {\em $R$-labeling} of $P$, if for every interval $[x,y]$ in $P$, 
there exists a unique chain
$x=x_1 \leftarrow x_2 \leftarrow \cdots \leftarrow x_{n-1} \leftarrow x_n = y$ 
such that
\begin{align}\label{R-label}
g(x_1,x_2) \leq g(x_2,x_3) \leq \cdots \leq g(x_{n-1},x_n).
\end{align}
Thus, $P$ is EL-shellable, if it has an $R$-labeling $g:C(P)\rightarrow \varGamma$ 
such that for each interval $[x,y]$ in $P$ the sequence (\ref{R-label}) is lexicographically 
smallest among all sequences of the form
$$
(g(x,x_2'),g(x_2',x_3'), \dots , g(x_{k-1}',y)),
$$
where $x \leftarrow x_2 \leftarrow ' \cdots \leftarrow x_{k-1}' \leftarrow y$.

For $S\subseteq [n]$, by $P_S$ we denote the subset $P_S = \{ x\in P:\ \ell(x) \in S\}$,
and denote by $\mu_S$ the M\"obius function of the poset $\hat{P}_S$ that is obtained 
from $P_S$ by adjoining a smallest and a largest element, if they are missing.
For an $R$-labeling $g:C(P)\rightarrow \varGamma$ of $P$, 
it is well known that the quantity $(-1)^{|S|-1} \mu_S (\hat{0}_{\hat{P}_S},\hat{1}_{\hat{P}_S})$
is equal to the number of maximal chains
$x_0=\hat{0} \leftarrow x_1 \leftarrow \cdots \leftarrow x_n=\hat{1}$ in $P$ 
for which the sequence $(g(x_0,x_1),\cdots,g(x_{n-1},x_n)$ has descent set $S$, 
that is to say, for which $\{ i \in [n]:\ g(x_{i-1},x_i) \geq g(x_{i+1},x_i) \} = S$. 
See Theorem 3.14.2 in \cite{StanleyEC1}.

\subsection{$B$-order on partial involutions}

The notation $F_n$, $I_n$, $PI_n$, $R_n$, $S_n$, $\mt{Skew}_n$, 
and $\mt{Sym}_n$ are as in the introduction.

Recall that $R_n$ parameterizes the $\mt{B}_n \times \mt{B}_n$-orbits in 
$\mt{Mat}_n$. 
For the purposes of this paper, it is more natural for us to look at the inclusion poset
of $\mt{B}_n^\top \times \mt{B}_n$-orbit closures in $R_n$, 
which we denote by $(R_n,\leq_{Rook})$. Here $\mt{B}_n^\top$ is the Borel subgroup 
of all lower triangular matrices from $\mt{GL}_n$.

In \cite{Cherniavsky11}, Cherniavsky shows that the Borel orbits in $\mt{Skew}_{n}$ 
are parametrized by those elements $x\in \mt{Skew}_{n}$
such that
\begin{enumerate}
\item the entries of $x$ are either 0,1 or -1,
\item any non-zero entry of $x$ that is above the main diagonal is a +1,
\item in every row and column of $x$ there exists at most one non-zero entry.
\end{enumerate}
Note that when -1's in $x$ are replaced by +1's, the resulting matrix $\tilde{x}$ 
is a partial involution with no diagonal entry.
In other words, $\tilde{x}$ is a fixed-point-free partial involution. 
It is easy to check that this correspondence is a bijection,
hence $PF_{n}$ parameterizes the Borel orbits in $\mt{Skew}_{n}$.

Containment relations among the closures of Borel orbits in $\mt{Skew}_{n}$ 
define a partial ordering on $PF_{n}$. We denote its dual by $\leq_{Skew}$.
Similarly, on $PI_n$ we have the dual of the partial ordering induced from 
the containment relations among the Borel orbit closures in $\mt{Sym}_n$. 
We denote this dual partial ordering by $\leq_{Sym}$.

\subsection{Combinatorial approach to the posets $R_{n}, PI_{n}, PF_{n}$.}
\label{S:combinatorial approach}

There is a combinatorial method for deciding when two elements 
$x$ and $y$ from $(R_{n},\leq_{Rook})$ (respectively, from $(PI_{n},\leq_{Sym})$, 
or from$(PF_{n},\leq_{Skew})$) are comparable with respect to $\leq_{Rook}$ 
(respectively, with respect to $\leq_{Sym}$, or $\leq_{Skew}$).
We denote by $Rk(x)$ the matrix whose $i,j$-th entry is the rank of the upper left $i\times j$ submatrix of
$x$. Hence, $Rk(x)$ is an $n\times n$ matrix with non-negative integer coordinates.
We call $Rk(x)$, the {\em rank-control matrix} of $x$.

Let $A=(a_{i,j})$ and $B=(b_{i,j})$ be two matrices of the same size with real number entries.
We write $A\leq B$ if $a_{i,j} \leq b_{i,j}$ for all $i$ and $j$.
Then
\begin{align}\label{combinatorial_criterion}
x \leq_{Rook} y \iff Rk(y) \leq Rk(x).
\end{align}
The same criterion holds for the posets $\leq_{Sym}$ and $\leq_{Skew}$.


We recall some fundamental facts about the covering relations of 
$\leq_{Sym}$ and $\leq_{Skew}$.  Our references are \cite{BC12} and \cite{Cherniavsky11}.
Let  $Rk(x)=(r_{i,j})_{i,j=1}^{m}$ denote the rank-control matrix of an $m\times m$ matrix $x$.
As a notation we set $r_{0,i}=0$ for $i=0,\dots, m$ and define
\begin{align}
\rho_\leq (x) &= \# \{ (i,j):\ 1 \leq i \leq j \leq n\ \text{and}\ r_{i,j}=r_{i-1,j-1} \},\\
\rho_< (x) &=  \# \{ (i,j):\ 1 \leq i < j \leq n\ \text{and}\ r_{i,j}=r_{i-1,j-1} \}.
\end{align}
Then the length function $\ell_{{PF}_{n}}$ of the poset $PF_{n}$ is equal to the restriction of
$\rho_{<}$ to $PF_{n}$.
Furthermore, $x$ covers $y$ if and only if $Rk(x)\leqslant Rk(y)$ and 
$\ell_{PF_{n}} (x) - \ell_{PF_{n}}(y) = 1$.


The length function of $PF_n$ differs from the length function of 
$PI_n$ in two ways: The ranks of two matrices $y<x$ in $PF_n$ differ 
by a multiple of 2, and the smallest element in $PI_n$ is the identity matrix, 
which is not in $PF_n$. The minimal element in $PF_n$ is given by the matrix 
with the largest rank-control matrix.
This means that in the case when $n$ is even 
$\ell_{PF_n}(x)=\ell_{PI_n}(x)-\frac{n-rk(x)}{2}-\frac{n}{2}$.
We subtract $\frac{n-rk(x)}{2}$ so that the length function increases 
only by 1 if the rank drops by 2 and we subtract $\frac{n}{2}$ because 
the minimal element has to have length zero.
Similarly, when $n$ is odd we have to subtract $\frac{n-1-rk(x)}{2}$ and $\frac{n+1}{2}$.
Summarizing, we see that for all $n$ the length function $\ell_{PF_n}(x)$ of $PF_n$ is given by
\begin{align}
\ell_{PF_n}(x) &= \ell_{PI_n}(x)-\frac{n-rk(x)}{2}-\frac{n}{2} \notag \\
& =\ell_{PI_n}(x)-\frac{2n-rk(x)}{2} \notag \\ 
&=\rho_<(x)-\frac{2n-rk(x)}{2}.\label{A:a formula for ell}
\end{align}

\begin{Example}
When $n=6$, the smallest element is 
$$
\omega_0=\begin{pmatrix}
0&1&0&0&0&0\\
1&0&0&0&0&0\\
0&0&0&1&0&0\\
0&0&1&0&0&0\\
0&0&0&0&0&1\\
0&0&0&0&1&0
\end{pmatrix},
$$
and when $n=5$, the smallest element is
$
\omega_0=\begin{pmatrix}
0&1&0&0&0\\
1&0&0&0&0\\
0&0&0&1&0\\
0&0&1&0&0\\
0&0&0&0&0
\end{pmatrix}.
$

\end{Example}

\section{An EL-labeling of $PF_n$}\label{S:coverings}

We recall some results on the covering relations of $I_n$, $F_n$, 
and of $PI_n$~\cite{Incitti04,CanCherniavskyTwelbeck, CanTwelbeck}.

\subsection{EL-labeling of $I_n$}

For a permutation $\sigma \in S_n$, a {\em rise} of $\sigma$ is a pair of indices 
$1\leq i_1,i_2\leq n$ such that
$$
i_1 < i_2\  \text{and}\ \sigma(i_1)<\sigma(i_2).
$$
A rise $(i_1,i_2)$ is called {\em free}, if there is no $k \in [n]$ such that
$$
i_1<k<i_2\ \text{and}\ \sigma(i_1)<\sigma(k)<\sigma(i_2).
$$
For $\sigma \in S_n$, define its {\em fixed point set, its exceedance set} and 
its {\em defect set} to be
\begin{align*}
I_f(\sigma) &=Fix(\sigma)=\{i \in [n]:\sigma(i)=i\}, \\
I_e(\sigma) &=Exc(\sigma)=\{i \in [n]:\sigma(i)>i\}, \\
I_d(\sigma) &=Def(\sigma)=\{i \in [n]:\sigma(i)<i\},
\end{align*}
respectively.

Given a rise $(i_1,i_2)$ of $\sigma$, its {\em type} is defined to be the pair $(a,b)$, if
$i_1 \in I_a(\sigma)$ and $i_2 \in I_b(\sigma)$, for some $a,b \in \{f,e,d\}$.
We call a rise of type $(a,b)$ an {\em $ab$-rise}.
On the other hand, two kinds of $ee$-rises have to be distinguished from each other;
an $ee$-rise is called {\em crossing}, if
$i_1<\sigma(i_1)<i_2<\sigma(i_2)$, and it is called {\em non-crossing}, 
if $i_1<i_2<\sigma(i_1)<\sigma(i_2)$.

The rise $(i_1,i_2)$ of an involution $\sigma \in I_n$ is called {\em suitable} 
if it is free and if its type is one of the following:
$(f,f), (f,e), (e,f), (e,e), (e,d).$

A {\em covering transformation}, denoted $ct_{(i_1,i_2)}(\sigma)$, 
of a suitable rise $(i_1,i_2)$ of $\sigma$ is the involution obtained 
from $\sigma$ by moving the 1's from the black dots to the white dots as depicted 
in Figure~\ref{CTofIncitti}.

It is shown in \cite{Incitti04} that if $\tau$ and $\sigma$ are two involutions in $I_n$, then
\begin{align*}
\tau \ \text{covers}\ \sigma\ \text{in}\ \leq_{Sym}\  \iff \ \tau = ct_{(i_1,i_2)}(\sigma),\ 
\text{for some suitable rise}\ (i_1,i_2)\ \text{of}\ \sigma.
\end{align*}
Let $\varGamma$ denote the totally ordered set $[n]\times [n]$ with respect to lexicographic ordering.
In the same paper, Incitti shows that the labeling defined by
\begin{align}\label{Incitti's labeling}
f_\varGamma ((\sigma, ct_{(i_1,i_2)}(\sigma))) := (i_1,i_2) \in \varGamma
\end{align}
is an EL-labeling, hence, $(I_n, \leq_{Sym})$ is an EL-shellable poset.

\begin{figure}[htp]
\begin{center}
\scalebox{1}{
\begin{tikzpicture}[scale=.45]

\begin{scope}[xshift=0,yshift=17.5cm]
\begin{scope}[xshift=-10cm,yshift=0cm]
\node at (0,0) {$ff$-rise:};
\end{scope}
\begin{scope}[xshift=2cm,yshift=0cm]
\node at (0,0) {$\longleftarrow$};
\end{scope}
\begin{scope}[xshift=-2cm,yshift=0cm]
\draw[dotted, thick,black] (-2.5,2.5) -- (2.5,2.5) -- (2.5,-2.5) -- (-2.5,-2.5) -- (-2.5,2.5);
\draw[dotted, thick, black] (-2.5,2.5) -- (2.5,-2.5);
\draw[dotted, thick,black] (-2.5,2.5) -- (2.5,2.5) -- (2.5,-2.5) -- (-2.5,-2.5) -- (-2.5,2.5);
\draw[dotted, thick, black] (-2.5,2.5) -- (2.5,-2.5);
\filldraw[dotted,fill=gray!45,opacity=.95]  (-.5,.5) -- (.5,.5) -- (.5,-.5) -- (-.5,-.5) -- (-.5,.5) ;
\node at (.5,.5) {0};
\node at (-.5,-.5) {0};
\node at (-.5,.5) {1};
\node at (.5,-.5) {1};
\node at (-2.8,.5) {$i$};
\node at (-2.8,-.5) {$j$};
\node at (-.6,2.85) {$i$};
\node at (.65,2.85) {$j$};
\end{scope}
\begin{scope}[xshift=6.5cm,yshift=0cm]
\draw[dotted, thick,black] (-2.5,2.5) -- (2.5,2.5) -- (2.5,-2.5) -- (-2.5,-2.5) -- (-2.5,2.5);
\draw[dotted, thick, black] (-2.5,2.5) -- (2.5,-2.5);
\filldraw[dotted,fill=gray!45,opacity=.95]  (-.5,.5) -- (.5,.5) -- (.5,-.5) -- (-.5,-.5) -- (-.5,.5) ;
\node at (.5,.5) {1};
\node at (-.5,-.5) {1};
\node at (-.5,.5) {0};
\node at (.5,-.5) {0};
\node at (-2.8,.5) {$i$};
\node at (-2.8,-.5) {$j$};
\node at (-.6,2.85) {$i$};
\node at (.65,2.85) {$j$};
\end{scope}
\end{scope}

\begin{scope}[xshift=0,yshift=10.5cm]
\begin{scope}[xshift=-10cm,yshift=0cm]
\node at (0,0) {$fe$-rise:};
\end{scope}
\begin{scope}[xshift=2cm,yshift=0cm]
\node at (0,0) {$\longleftarrow$};
\end{scope}
\begin{scope}[xshift=-2cm,yshift=0cm]
\draw[dotted, thick,black] (-2.5,2.5) -- (2.5,2.5) -- (2.5,-2.5) -- (-2.5,-2.5) -- (-2.5,2.5);
\draw[dotted, thick, black] (-2.5,2.5) -- (2.5,-2.5);
\filldraw[dotted,fill=gray!45,opacity=.95] (-1.5,1.5) -- (1,1.5) -- (1,.5) -- (-.5,.5) -- (-.5,-1) -- (-1.5,-1);
\node at (-1.5,1.5) {1};
\node at (1,.5) {1};
\node at (-.5,-1) {1};
\node at (-.5,.5) {0};
\node at (1,1.5) {0};
\node at (-1.5,-1) {0};
\node at (-2.75,1.5) {$i$};
\node at (-2.75,.5) {$j$};
\node at (-3,-.9) {\scriptsize{$\sigma(j)$}};
\node at (-1.5,2.85) {$i$};
\node at (-.6,2.85) {$j$};
\node at (.9,2.85) {\scriptsize{$\sigma(j)$}};
\end{scope}
\begin{scope}[xshift=6.5cm,yshift=0cm]
\draw[dotted, thick,black] (-2.5,2.5) -- (2.5,2.5) -- (2.5,-2.5) -- (-2.5,-2.5) -- (-2.5,2.5);
\draw[dotted, thick, black] (-2.5,2.5) -- (2.5,-2.5);
\filldraw[dotted,fill=gray!45,opacity=.95] (-1.5,1.5) -- (1,1.5) -- (1,.5) -- (-.5,.5) -- (-.5,-1) -- (-1.5,-1);
\node at (-1.5,1.5) {0};
\node at (1,.5) {0};
\node at (-.5,-1) {0};
\node at (-.5,.5) {1};
\node at (1,1.5) {1};
\node at (-1.5,-1) {1};
\node at (-2.75,1.5) {$i$};
\node at (-2.75,.5) {$j$};
\node at (-3,-.9) {\scriptsize{$\sigma(j)$}};
\node at (-1.5,2.85) {$i$};
\node at (-.6,2.85) {$j$};
\node at (.9,2.85) {\scriptsize{$\sigma(j)$}};
\end{scope}
\end{scope}

\begin{scope}[xshift=0,yshift=3.5cm]
\begin{scope}[xshift=-10cm,yshift=0cm]
\node at (0,0) {$ef$-rise:};
\end{scope}
\begin{scope}[xshift=2cm,yshift=0cm]
\node at (0,0) {$\longleftarrow$};
\end{scope}
\begin{scope}[xshift=-2cm,yshift=0cm]
\draw[dotted, thick,black] (-2.5,2.5) -- (2.5,2.5) -- (2.5,-2.5) -- (-2.5,-2.5) -- (-2.5,2.5);
\draw[dotted, thick, black] (-2.5,2.5) -- (2.5,-2.5);
\filldraw[dotted,fill=gray!45,opacity=.95] (0,0) -- (0,1.5) -- (1,1.5)-- (1,-1)  -- (-1.5,-1) 
-- (-1.5, 0) -- (0,0);
\node at (0,1.5) {1};
\node at (-1.5,0) {1};
\node at (1,-1) {1};
\node at (0,0) {0};
\node at (1,1.5) {0};
\node at (-1.5,-1) {0};
\node at (-2.8,1.5) {$i$};
\node at (-2.9,0) {\scriptsize{$\sigma(i)$}};
\node at (-2.8,-1) {$j$};
\node at (-1.5, 2.85) {$i$};
\node at (-.25, 2.85) {\scriptsize{$\sigma(i)$}};
\node at (1, 2.85) {$j$};
\end{scope}
\begin{scope}[xshift=6.5cm,yshift=0cm]
\draw[dotted, thick,black] (-2.5,2.5) -- (2.5,2.5) -- (2.5,-2.5) -- (-2.5,-2.5) -- (-2.5,2.5);
\draw[dotted, thick, black] (-2.5,2.5) -- (2.5,-2.5);
\filldraw[dotted,fill=gray!45,opacity=.95] (0,0) -- (0,1.5) -- (1,1.5)-- (1,-1)  -- (-1.5,-1) 
-- (-1.5, 0) -- (0,0);
\node at (0,1.5) {0};
\node at (-1.5,0) {0};
\node at (1,-1) {0};
\node at (0,0) {1};
\node at (1,1.5) {1};
\node at (-1.5,-1) {1};
\node at (-2.8,1.5) {$i$};
\node at (-2.9,0) {\scriptsize{$\sigma(i)$}};
\node at (-2.8,-1) {$j$};
\node at (-1.5, 2.85) {$i$};
\node at (-.25, 2.85) {\scriptsize{$\sigma(i)$}};
\node at (1, 2.85) {$j$};
\end{scope}
\end{scope}

\begin{scope}[xshift=0,yshift=-3.5cm]
\begin{scope}[xshift=-12.5cm,yshift=0cm]
\node at (0,0) {Non-crossing $ee$-rise:};
\end{scope}
\begin{scope}[xshift=2cm,yshift=0cm]
\node at (0,0) {$\longleftarrow$};
\end{scope}
\begin{scope}[xshift=-2cm,yshift=0cm]
\draw[dotted, thick,black] (-2.5,2.5) -- (2.5,2.5) -- (2.5,-2.5) -- (-2.5,-2.5) -- (-2.5,2.5);
\draw[dotted, thick, black] (-2.5,2.5) -- (2.5,-2.5);
\filldraw[dotted,fill=gray!45,opacity=.95] (.5,.5) -- (1.5,.5) -- (1.5,1.5) -- (.5,1.5) -- (.5,.5) ;
\filldraw[dotted,fill=gray!45,opacity=.95] (-.5,-.5) -- (-1.5,-.5) -- (-1.5,-1.5) -- (-.5,-1.5) -- (-.5,-.5) ;
\node at (.5,.5) {0};
\node at (1.5,.5) {1};
\node at (1.5,1.5) {0};
\node at (.5,1.5) {1};
\node at (-.5,-.5) {0};
\node at (-1.5,-.5) {1};
\node at (-1.5,-1.5) {0};
\node at (-.5,-1.5) {1};
\node at (-2.8,1.5) {$i$};
\node at (-2.8,.5) {$j$};
\node at (-3,-.5) {\scriptsize{$\sigma(i)$}};
\node at (-3,-1.5) {\scriptsize{$\sigma(j)$}};
\node at (-1.5,2.85) {$i$};
\node at (-.5,2.85) {$j$};
\node at (.5,2.85) {\scriptsize{$\sigma(i)$}};
\node at (1.75,2.85) {\scriptsize{$\sigma(j)$}};
\end{scope}
\begin{scope}[xshift=6.5cm,yshift=0cm]
\draw[dotted, thick,black] (-2.5,2.5) -- (2.5,2.5) -- (2.5,-2.5) -- (-2.5,-2.5) -- (-2.5,2.5);
\draw[dotted, thick, black] (-2.5,2.5) -- (2.5,-2.5);
\filldraw[dotted,fill=gray!45,opacity=.95] (.5,.5) -- (1.5,.5) -- (1.5,1.5) -- (.5,1.5) -- (.5,.5) ;
\filldraw[dotted,fill=gray!45,opacity=.95] (-.5,-.5) -- (-1.5,-.5) -- (-1.5,-1.5) -- (-.5,-1.5) -- (-.5,-.5) ;
\node at (.5,.5) {1};
\node at (1.5,.5) {0};
\node at (1.5,1.5) {1};
\node at (.5,1.5) {0};
\node at (-.5,-.5) {1};
\node at (-1.5,-.5) {0};
\node at (-1.5,-1.5) {1};
\node at (-.5,-1.5) {0};
\node at (-2.8,1.5) {$i$};
\node at (-2.8,.5) {$j$};
\node at (-3,-.5) {\scriptsize{$\sigma(i)$}};
\node at (-3,-1.5) {\scriptsize{$\sigma(j)$}};
\node at (-1.5,2.85) {$i$};
\node at (-.5,2.85) {$j$};
\node at (.5,2.85) {\scriptsize{$\sigma(i)$}};
\node at (1.75,2.85) {\scriptsize{$\sigma(j)$}};
\end{scope}
\end{scope}

\begin{scope}[xshift=0,yshift=-10.5cm]
\begin{scope}[xshift=-12cm,yshift=0cm]
\node at (0,0) {Crossing $ee$-rise:};
\end{scope}
\begin{scope}[xshift=2cm,yshift=0cm]
\node at (0,0) {$\longleftarrow$};
\end{scope}
\begin{scope}[xshift=-2cm,yshift=0cm]
\draw[dotted, thick,black] (-2.5,2.5) -- (2.5,2.5) -- (2.5,-2.5) -- (-2.5,-2.5) -- (-2.5,2.5);
\draw[dotted, thick, black] (-2.5,2.5) -- (2.5,-2.5);
\filldraw[dotted,fill=gray!45,opacity=.95] (-.5,.5) -- (-.5,1.5) -- (1.5,1.5)-- (1.5, -.5)  
-- (.5,-.5) -- (.5,-1.5) -- (-1.5,-1.5)-- (-1.5,-.5) -- (-1.5, .5) -- (-.5,.5);
\node at (-.5,1.5) {0};
\node at (1.5,-.5) {0};
\node at (.5,-1.5) {0};
\node at (-1.5,.5) {0};
\node at (1.5,1.5) {1};
\node at (-1.5,-1.5) {1};
\node at (-.5,.5) {1};
\node at (.5,-.5) {1};
\node at (-2.8,1.5) {$i$};
\node at (-2.9,.5) {\scriptsize{$\sigma(i)$}};
\node at (-2.8,-.5) {$j$};
\node at (-3,-1.5) {\scriptsize{$\sigma(j)$}};
\node at (-1.5,2.85) {$i$};
\node at (-.6,2.85) {\scriptsize{$\sigma(i)$}};
\node at (.65,2.85) {$j$};
\node at (1.75,2.85) {\scriptsize{$\sigma(j)$}};
\end{scope}
\begin{scope}[xshift=6.5cm,yshift=0cm]
\draw[dotted, thick,black] (-2.5,2.5) -- (2.5,2.5) -- (2.5,-2.5) -- (-2.5,-2.5) -- (-2.5,2.5);
\draw[dotted, thick, black] (-2.5,2.5) -- (2.5,-2.5);
\filldraw[dotted,fill=gray!45,opacity=.95] (-.5,.5) -- (-.5,1.5) -- 
(1.5,1.5)-- (1.5, -.5)  -- (.5,-.5) -- (.5,-1.5) -- (-1.5,-1.5)-- (-1.5,-.5) -- (-1.5, .5) -- (-.5,.5);
\node at (-.5,1.5) {1};
\node at (1.5,-.5) {1};
\node at (.5,-1.5) {1};
\node at (-1.5,.5) {1};
\node at (1.5,1.5) {0};
\node at (-1.5,-1.5) {0};
\node at (-.5,.5) {0};
\node at (.5,-.5) {0};
\node at (-2.8,1.5) {$i$};
\node at (-2.9,.5) {\scriptsize{$\sigma(i)$}};
\node at (-2.8,-.5) {$j$};
\node at (-3,-1.5) {\scriptsize{$\sigma(j)$}};
\node at (-1.5,2.85) {$i$};
\node at (-.6,2.85) {\scriptsize{$\sigma(i)$}};
\node at (.65,2.85) {$j$};
\node at (1.75,2.85) {\scriptsize{$\sigma(j)$}};
\end{scope}
\end{scope}

\begin{scope}[xshift=0,yshift=-17.5cm]
\begin{scope}[xshift=-10cm,yshift=0cm]
\node at (0,0) {$ed$-rise:};
\end{scope}
\begin{scope}[xshift=2cm,yshift=0cm]
\node at (0,0) {$\longleftarrow$};
\end{scope}
\begin{scope}[xshift=-2cm,yshift=0cm]
\draw[dotted, thick,black] (-2.5,2.5) -- (2.5,2.5) -- (2.5,-2.5) -- (-2.5,-2.5) -- (-2.5,2.5);
\draw[dotted, thick, black] (-2.5,2.5) -- (2.5,-2.5);
\filldraw[dotted,fill=gray!45,opacity=.95] (-.5,.5) -- (-.5,1.5) -- (.5,1.5) -- (.5,.5) -- (1.5,.5) -- (1.5, -.5)  -- (.5,-.5) 
-- (.5,-1.5) -- (-.5,-1.5) -- (-.5, -.5) -- (-1.5,-.5) -- (-1.5, .5) -- (-.5,.5);
\node at (-.5,1.5) {1};
\node at (.5,1.5) {0};
\node at (1.5,.5) {0};
\node at (1.5,-.5) {1};
\node at (.5,-1.5) {1};
\node at (-.5,-1.5) {0};
\node at (-1.5,-.5) {0};
\node at (-1.5,.5) {1};
\node at (-2.8,1.5) {$i$};
\node at (-2.9,.5) {\scriptsize{$\sigma(i)$}};
\node at (-3,-.5) {\scriptsize{$\sigma(j)$}};
\node at (-2.8,-1.5) {$j$};
\node at (-1.5,2.85) {$i$};
\node at (-.6,2.85) {\scriptsize{$\sigma(i)$}};
\node at (.65,2.85) {\scriptsize{$\sigma(j)$}};
\node at (1.75,2.85) {$j$};
\end{scope}
\begin{scope}[xshift=6.5cm,yshift=0cm]
\draw[dotted, thick,black] (-2.5,2.5) -- (2.5,2.5) -- (2.5,-2.5) -- (-2.5,-2.5) -- (-2.5,2.5);
\draw[dotted, thick, black] (-2.5,2.5) -- (2.5,-2.5);
\filldraw[dotted,fill=gray!45,opacity=.95] (-.5,.5) -- (-.5,1.5) -- (.5,1.5) -- (.5,.5) -- 
(1.5,.5) -- (1.5, -.5)  -- (.5,-.5) 
-- (.5,-1.5) -- (-.5,-1.5) -- (-.5, -.5) -- (-1.5,-.5) -- (-1.5, .5) -- (-.5,.5);
\node at (-.5,1.5) {0};
\node at (.5,1.5) {1};
\node at (1.5,.5) {1};
\node at (1.5,-.5) {0};
\node at (.5,-1.5) {0};
\node at (-.5,-1.5) {1};
\node at (-1.5,-.5) {1};
\node at (-1.5,.5) {0};
\node at (-2.8,1.5) {$i$};
\node at (-2.9,.5) {\scriptsize{$\sigma(i)$}};
\node at (-3,-.5) {\scriptsize{$\sigma(j)$}};
\node at (-2.8,-1.5) {$j$};
\node at (-1.5,2.85) {$i$};
\node at (-.6,2.85) {\scriptsize{$\sigma(i)$}};
\node at (.65,2.85) {\scriptsize{$\sigma(j)$}};
\node at (1.75,2.85) {$j$};
\end{scope}
\end{scope}

\end{tikzpicture}
}
\end{center}
\caption{Covering transformations $\sigma \leftarrow \tau=ct_{(i,j)}(\sigma)$ of $I_n$.}
\label{CTofIncitti}
\end{figure}
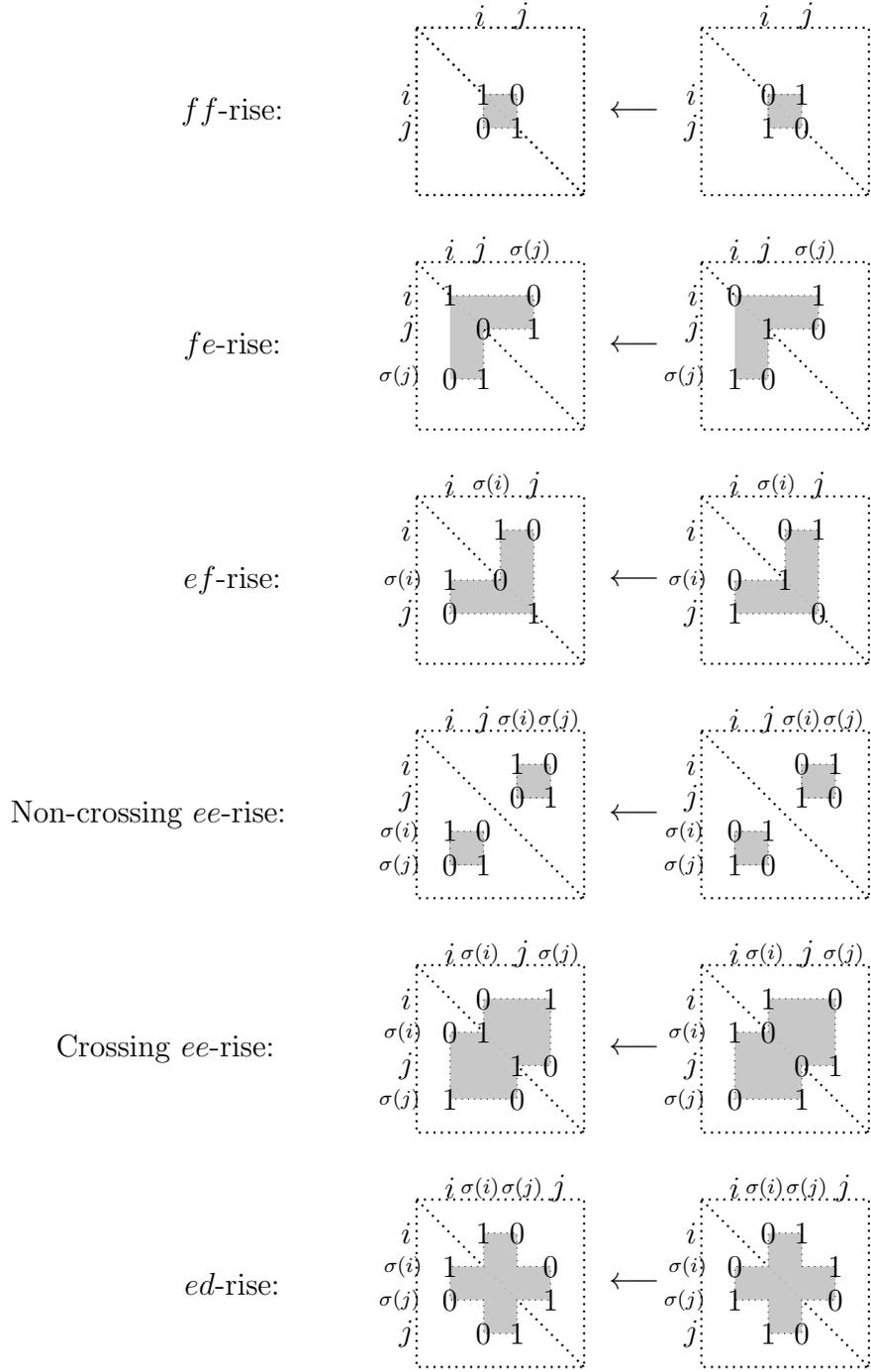

\subsection{EL-labeling of $F_{2n}$}

Recall that $F_{2n}$ is a connected graded subposet of $I_{2n}$. 
Therefore, its covering relations are among the covering relations of $I_{2n}$.
On the other hand, within $F_{2n}$ we use two types of covering transformations, only: 
a non-crossing $ee$-rise and an $ed$-rise.
These moves correspond to the items numbered 4 and 6 in Table 1 of \cite{Incitti04}.
It is shown in~\cite{CanCherniavskyTwelbeck} that these covering labels is an EL-labeling
for $F_{2n}$.

\subsection{EL-labeling of $PI_n$}

When two partial involutions $x$ and $y$ have the same zero rows 
and zero columns, the covering relation $x\rightarrow y$ is not different than 
the invertible case. 
\begin{Example} \label{E:1}
$$
y=\begin{pmatrix}
1&0&0\\
0&0&0\\
0&0&1
\end{pmatrix}
\
\text{is covered by}
\
x=\begin{pmatrix}
0&0&1\\
0&0&0\\
1&0&0
\end{pmatrix}.
$$
Note that $x\rightarrow y$ if and only if the invertible involution $\tilde{x}$, that is 
obtained from $x$ by removing the rows and columns of $x$ with no
non-zero entries, covers the invertible involution $\tilde{y}$ that is obtained 
from $y$ by removing its rows and columns with zeros only. 
\end{Example} 

Moving down a non-zero entry along the diagonal gives a covering relation:
\begin{Example}\label{E:2}
$$
y=\begin{pmatrix}
0&0&1&0\\
0&1&0&0\\
1&0&0&0\\
0&0&0&0\\
\end{pmatrix}
\
\text{is covered by}
\
x=\begin{pmatrix}
0&0&1&0\\
0&0&0&0\\
1&0&0&0\\
0&0&0&1\\
\end{pmatrix}.
$$
Similarly, 
$$
y=\begin{pmatrix}
0&0&0\\
0&1&0\\
0&0&1\\
\end{pmatrix}
\
\text{is covered by}
\
x=\begin{pmatrix}
0&0&0\\
0&1&0\\
0&0&0\\
\end{pmatrix}.
$$
\end{Example}

Another type of covering relation is obtained by the moving of 
off-diagonal pairs $(i,j)$ and $(j,i)$, where $i>j$ to down/right, or 
to right/down available positions.
\begin{Example}\label{E:3}
There are two cases:
\begin{enumerate}
\item
$\ds{
y=\begin{pmatrix}
0&1&0\\
1&0&0\\
0&0&0
\end{pmatrix}
\
\text{is covered by}
\
x=\begin{pmatrix}
0&0&1\\
0&0&0\\
1&0&0
\end{pmatrix},
}$
\item $\ds{
y=\begin{pmatrix}
0&0&1\\
0&0&0\\
1&0&0
\end{pmatrix}
\
\text{is covered by}
\
x=\begin{pmatrix}
0&0&0\\
0&0&1\\
0&1&0
\end{pmatrix}.}
$
\end{enumerate}
\end{Example}
When a down/right move is performed on $y$ (as in part 2. of Example~\ref{E:3}),
there may not be any available positions to place the non-zero entries of $x$. 
In this case, the pushed entries are placed on the diagonal. If there
are no available diagonal entries for both of the 1's, then one of them 
is pushed out of the matrix.  
\begin{Example}\label{E:4}
Once again, there are two moves of similar nature:
\begin{enumerate}
\item 
$\ds{
y=\begin{pmatrix}
0& 0&1&0\\
0& 1&0&0\\
1& 0&0&0 \\
0&0&0&0
\end{pmatrix}
\
\text{is covered by}
\
x=\begin{pmatrix}
0&0&0 &0\\
0&1&0 &0\\
0&0&1& 0\\
0& 0 & 0& 1
\end{pmatrix}.}
$
\item 
$\ds{
y=\begin{pmatrix}
0&1\\
1&0
\end{pmatrix}
\
\text{is covered by}
\
x=\begin{pmatrix}
0&0\\
0&1
\end{pmatrix}.}$
\end{enumerate}
\end{Example}

In the light of the above examples, we label a covering relation $x\rightarrow y$
in $PI_n$ as follows.
\begin{Definition}\label{D:coverings}
\begin{enumerate}
\item As in Example~\ref{E:1}, if the covering relation $x\rightarrow y$ 
is derived from the covering relation $\tilde{x} \rightarrow \tilde{y}$ of 
invertible involutions that are obtained from $x$ and $y$, respectively, 
then we use the labeling $\tilde{x} \rightarrow \tilde{y}$ as defined 
in~\cite{Incitti04}.
\item If the covering relation results from a move as in Example~\ref{E:2}, 
namely from a diagonal push where the element that is pushed from is at 
the position $(i,i)$, then we label it by $(i,i)$. 
\item Suppose $x\rightarrow y$ is as in Example~\ref{E:3}, or~\ref{E:4}.
Observe that, in all of these covering relations, one of the 1's is pushed 
down and the other is pushed right. Let $i$ denote the column index of 
the first 1 that is pushed to the right, and let $j$ denote the index of the 
resulting column. Then we label the covering by $(i,j)$. 
\end{enumerate}
\end{Definition}

To illustrate the third labeling let us present a few more examples. 

\begin{Example}
$$
y=\begin{pmatrix}
0&0&0&1&0\\
0&0&1&0&0\\
0&1&0&0&0\\
1&0&0&0&0\\
0&0&0&0&0
\end{pmatrix}
\
\text{is covered by}
\
x=\begin{pmatrix}
0&0&0&1&0\\
0&0&0&0&1\\
0&0&0&0&0\\
1&0&0&0&0\\
0&1&0&0&0\end{pmatrix}
$$
The corresponding labeling here is $(3,5)$.
\end{Example}
\begin{Example}
$$
y=\begin{pmatrix}
0&0&0&0&0&1\\
0&0&0&0&1&0\\
0&0&0&0&0&0\\
0&0&0&0&0&0\\
0&1&0&0&0&0\\
1&0&0&0&0&0
\end{pmatrix}
\
\text{is covered by}
\
x=\begin{pmatrix}
0&0&0&0&0&0\\
0&0&0&0&1&0\\
0&0&0&0&0&1\\
0&0&0&0&0&0\\
0&1&0&0&0&0\\
0&0&1&0&0&0
\end{pmatrix}
$$
The corresponding labeling here is $(1,3)$.

\end{Example}
\begin{Example}\label{E:y to z}
$$
y=\begin{pmatrix}
0&0&0&1&0\\
0&0&1&0&0\\
0&1&0&0&0\\
1&0&0&0&0\\
0&0&0&0&0
\end{pmatrix}
\
\text{is covered by}
\
x=\begin{pmatrix}
0&0&0&1&0\\
0&0&0&0&0\\
0&0&1&0&0\\
1&0&0&0&0\\
0&0&0&0&1
\end{pmatrix}
$$
The corresponding labeling here is $(2,3)$.

\end{Example}

\begin{Definition}\label{D:cdr}
If $x$ covers $y$ with label $(i,j)$, then we refer to it as an $(i,j)$-covering
and say that $y$ is obtained from $x$ by an $(i,j)$-move.
More briefly, we call a covering relation a {\em $c$-cover}, 
if it is derived from an involution; 
a {\em $d$-cover}, if it is obtained by a shift of a diagonal element; 
an {\em $r$-cover}, if it is derived from a right/down, or from 
a down/right move. The corresponding moves of 1's are 
referred to as $c$-, $d$- and $r$-moves.
\end{Definition}


\begin{Lemma}[Lemma 16,~\cite{CanTwelbeck}]\label{L:explicit covering relations for PI_n} 
Let $x$ and $y$ be two partial involutions. Then $x$ covers $y$ if and  
only if one of the following is true:

\begin{enumerate}
\item $x$ is obtained from $y$ by a $c$-move as in Example~\ref{E:1}.
\item Without removing a suitable rise, $x$ is obtained from 
$y$ by one of the following moves:
\begin{enumerate}
\item a $d$-move, as in Example~\ref{E:2}, 
\item an $r$-move, as in Example~\ref{E:3}, or as in Example~\ref{E:4}.
\end {enumerate}
\end{enumerate}
\end{Lemma}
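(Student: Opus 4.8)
The plan is to reduce everything to the two facts recalled above: the rank-control criterion~(\ref{combinatorial_criterion}), which says that $x\le_{Sym}y$ in $PI_n$ if and only if $Rk(y)\leqslant Rk(x)$ entrywise, together with the fact (from~\cite{BC12}) that $(PI_n,\le_{Sym})$ is graded with length function $\rho_\le$ restricted to $PI_n$. Combining these, $x$ covers $y$ in $PI_n$ precisely when $Rk(x)\leqslant Rk(y)$, $Rk(x)\ne Rk(y)$, and $\rho_\le(x)=\rho_\le(y)+1$. So the proof divides into a sufficiency half---each listed move produces such a pair $(x,y)$---and a necessity half---every covering relation is one of these moves.

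For sufficiency I would argue move by move. In each of Examples~\ref{E:1}--\ref{E:4} the nonzero entries of $x$ arise from those of $y$ by sliding entries weakly down and weakly to the right, so each upper-left $i\times j$ block of $x$ contains no more $1$'s than the corresponding block of $y$; since these blocks are submatrices of partial permutation matrices, their rank equals the number of $1$'s they contain, whence $Rk(x)\leqslant Rk(y)$, with strict inequality somewhere because at least one entry actually moves. It remains to check $\rho_\le(x)=\rho_\le(y)+1$. For a $c$-move I would delete the rows and columns that vanish identically in both $x$ and $y$; this deletion respects the rank-control order and the difference of $\rho_\le$-values, so it identifies $(x,y)$ with a pair $(\bar x,\bar y)$ of invertible involutions in some $I_m$ with $\bar x=ct_{(i_1,i_2)}(\bar y)$ for a suitable rise, and Incitti's theorem then gives $\rho_\le(\bar x)=\rho_\le(\bar y)+1$. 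For a $d$-move or an $r$-move one computes directly: the rank-control matrix changes only in the rectangle (or the lower-right strip, when an entry leaves the board) spanned by the initial and final positions of the shifted entry or conjugate pair, and I expect a bookkeeping count of the cells $(i,j)$ with $r_{i,j}=r_{i-1,j-1}$ that are created and destroyed to come out to a net $+1$; the hypothesis ``without removing a suitable rise'' is exactly what keeps any further cell out of this count.

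For necessity, suppose $x$ covers $y$. If $x$ and $y$ have the same zero rows and the same zero columns, I would delete them to obtain invertible involutions $\bar x\ge_{Sym}\bar y$ in some $I_m$; any involution lying strictly between $\bar y$ and $\bar x$ would inflate to an element of $PI_n$ strictly between $y$ and $x$, so $\bar x$ covers $\bar y$, and Incitti's classification yields $\bar x=ct_{(i_1,i_2)}(\bar y)$ for a suitable rise, i.e.\ case~(1). Otherwise $x$ has strictly fewer nonzero entries than $y$, lying weakly below and to the right of those of $y$. Here I would single out the lexicographically first cell at which $Rk(x)$ and $Rk(y)$ differ; tracing it back pins down a single nonzero entry of $y$---either on the diagonal, or one half of an off-diagonal conjugate pair---whose displacement accounts for the discrepancy. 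Performing only the corresponding $d$- or $r$-move to $y$ produces some $z\in PI_n$ with $y<_{Sym}z\le_{Sym}x$, forcing $z=x$ since $x$ covers $y$; and if that move also removed a suitable rise, the intermediate element obtained by the associated $c$-move would lie strictly between $y$ and $x$, contradicting the covering. This gives case~(2).

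The main obstacle is this last step: showing that in the second case the gap between $x$ and $y$ is one elementary move rather than a superposition of several, and in particular that ``no suitable rise is removed'' is automatic. That is where one must understand how a single shifted entry (or conjugate pair) perturbs all of the rank-control entries simultaneously, and separate the effect on the diagonal cells---the ones distinguishing $\rho_\le$ from $\rho_<$---from the effect on the off-diagonal cells. The verification is elementary but delicate, and is carried out in full in~\cite{CanTwelbeck}.
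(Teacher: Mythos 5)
There is no in-paper proof to compare against here: the paper imports this statement verbatim as Lemma~16 of~\cite{CanTwelbeck} and never proves it, so your attempt has to stand on its own. Judged that way, it is an outline with the central content missing. Your framework is right --- reduce to the criterion~(\ref{combinatorial_criterion}) together with gradedness of $(PI_n,\le_{Sym})$ by $\rho_\le$, so that ``$x$ covers $y$'' becomes ``$Rk(x)\leqslant Rk(y)$ and $\rho_\le(x)=\rho_\le(y)+1$'' --- but the lemma \emph{is} the quantitative assertion that a $d$- or $r$-move changes $\rho_\le$ by exactly one precisely when it destroys no suitable rise, and that nothing else does. At both places where this must be verified you stop short: in the sufficiency half you say you ``expect a bookkeeping count \dots to come out to a net $+1$,'' and in the necessity half you name the single-move decomposition as ``the main obstacle'' and then defer it to~\cite{CanTwelbeck}. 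Since that reference is the very statement being proved, this is circular; the cell-by-cell analysis of how a shifted entry (or conjugate pair, or an entry pushed off the board as in Example~\ref{E:4}) perturbs the rank-control matrix, and how the diagonal cells counted by $\rho_\le$ but not $\rho_<$ enter, is exactly what a proof must contain and is absent.

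Two further steps are asserted without justification and would need real arguments. First, in the necessity direction, locating the lexicographically first cell where $Rk(x)$ and $Rk(y)$ disagree does not by itself pin down a unique nonzero entry of $y$ nor a unique legal elementary move, and even granting a candidate move producing $z$, you still must check $Rk(x)\leqslant Rk(z)$ \emph{at every cell} to conclude $z\le_{Sym}x$ before the covering hypothesis forces $z=x$; none of this comparison is carried out. Second, your claim that if the move removes a suitable rise $(i_1,i_2)$ of $y$ then the element $ct_{(i_1,i_2)}(y)$ lies strictly between $y$ and $x$ again requires the entrywise inequality $Rk(x)\leqslant Rk\bigl(ct_{(i_1,i_2)}(y)\bigr)$, which you do not establish (the cleaner route is simply that removing a rise makes $\rho_\le$ drop by more than one, but that is once more the uncomputed count). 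The $c$-move case via deleting common zero rows and columns and invoking Incitti's classification is fine, provided you also note why a cover between elements with the same zero rows and columns cannot mix in a $d$- or $r$-move; as written, your case split in the necessity half assumes, rather than proves, that the two situations are exclusive and exhaustive.
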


It is shown in~\cite{CanTwelbeck} that the covering labelings defined in 
Definition~\ref{D:cdr} is an EL-labeling for $PI_n$.

\section{An EL-labeling of $PF_n$}
\label{S:main}

Covering relations of $F_n$ are covering relations in $I_n$, as well. 
Unfortunately, this is not the case for $PF_n$ relative to $PI_n$.
In other words, as a subposet of $PI_n$, $PF_n$ is not connected.  
For example, when $n=2$, there are only two partial fixed-point-free involutions:
$
x=\begin{pmatrix}
0&0\\
0&0
\end{pmatrix}
\
\text{and}
\
y=\begin{pmatrix}
0&1\\
1&0
\end{pmatrix}
$, hence $x$ covers $y$ as a partial fixed-point-free involution.
However, viewed as a partial involution $x$ does not cover $y$ since 
$y< \begin{pmatrix} 0&0\\
0&1\end{pmatrix} <x$.

\begin{Lemma}\label{L:coverings of PF_n}
Suppose $x\rightarrow y$ in $PF_n$. Then 
either $x$ covers $y$ as an element of $PI_n$, or there exists $z\in PI_n$ such that
$x \rightarrow z$ by an $d$-cover as an element of $PI_n$, 
and $z \rightarrow y$ by an $r$-cover in $PI_n$,  
where at each step the rank drops by 1.
Furthermore, in the first case, there are two possibilities:
\begin{enumerate}
\item
$x \rightarrow y$ is an $r$-cover in $PI_n$, or 
\item
$x \rightarrow y$ is a $c$-cover corresponding to a non-crossing $ee$, 
or to an $ed$-rise in $PI_n$.
\end{enumerate}
\end{Lemma}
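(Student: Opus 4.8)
The strategy is to analyze a covering relation $x \rightarrow y$ in $PF_n$ by comparing it against the rank-control matrix criterion \eqref{combinatorial_criterion} and the explicit list of covering relations of $PI_n$ given in Lemma~\ref{L:explicit covering relations for PI_n}. Since $x$ and $y$ are both partial fixed-point-free involutions with $x \leq_{Skew} y$, the interval $[y,x]$ in $PI_n$ (using the dual conventions carefully) is nonempty, so there is a saturated chain $x = z_0 \rightarrow z_1 \rightarrow \cdots \rightarrow z_m = y$ of $PI_n$-covers. The first observation to make is that, using the length formula \eqref{A:a formula for ell} together with the fact that $\rho_<$ increases by exactly $1$ along each $PI_n$-cover, the quantity $\ell_{PF_n}(x) - \ell_{PF_n}(y) = 1$ forces a tight bookkeeping relation on the ranks $rk(z_0) \geq rk(z_1) \geq \cdots \geq rk(z_m)$ and on how many of the intermediate $z_i$ lie in $PF_n$ versus $PI_n \setminus PF_n$. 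Concretely, a $c$-cover or $r$-cover may or may not change the rank, while a $d$-cover always drops the rank by exactly $1$ and produces a matrix with a diagonal $1$ (hence not in $PF_n$); so one tracks the "rank deficit" $2n - rk(z_i)$ against $\rho_<(z_i)$ to see that $m \leq 2$ and to pin down the type of each step.

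The second part of the plan is a careful case analysis on $m$. If $m = 1$, then $x$ covers $y$ already in $PI_n$; one must rule out the possibility that this $PI_n$-cover is a $c$-cover of a type other than non-crossing $ee$ or $ed$, and rule out that it is a $d$-cover. A $d$-cover is impossible because it would put a diagonal $1$ in $y$, contradicting $y \in PF_n$ (equivalently, it would change the rank by an odd amount while keeping both endpoints fixed-point-free, which is incompatible with the "rank changes by a multiple of $2$" remark preceding \eqref{A:a formula for ell}). For the $c$-covers, since both $x$ and $y$ are fixed-point-free, the associated invertible involutions $\tilde x, \tilde y$ obtained by deleting zero rows/columns are fixed-point-free, so the covering transformation $ct_{(i_1,i_2)}$ must preserve fixed-point-freeness; by inspection of Figure~\ref{CTofIncitti}, among the suitable rise types $(f,f),(f,e),(e,f),(e,e),(e,d)$, only the non-crossing $ee$-rise and the $ed$-rise involve no fixed points on either side — this is exactly the analysis already used for $F_{2n}$ in \cite{CanCherniavskyTwelbeck}, so it can be cited. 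That leaves the $r$-cover case, which is allowed as stated. If $m = 2$, the chain is $x \rightarrow z \rightarrow y$, and the length accounting shows the only way to reconcile a single unit increase in $\ell_{PF_n}$ with two unit increases in $\rho_<$ is that the total rank drops by $2$ while $z \notin PF_n$; the natural realization is that $x \rightarrow z$ is a $d$-cover (dropping rank by $1$, creating one diagonal $1$) and $z \rightarrow y$ is an $r$-cover (dropping rank by $1$, removing that diagonal $1$ along with annihilating another off-diagonal pair), matching the $n=2$ example in the text. One must check that no other pair of $PI_n$-cover types (two $d$-covers, a $c$ then $d$, etc.) can produce $x, y \in PF_n$ with the right length drop, and that the order of the two steps is forced ($d$ then $r$, not $r$ then $d$), which follows from the rank-control matrix comparison $Rk(x) \leqslant Rk(z) \leqslant Rk(y)$ together with the explicit effect of each move on the rank-control matrix.

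The main obstacle I anticipate is the $m = 2$ analysis: showing that exactly one intermediate element $z$ exists, that it is unique, and that the two steps must be $d$ then $r$ (in that order) with each dropping the rank by $1$. This requires combining the global length formula \eqref{A:a formula for ell} with a local understanding of how each of the three move types ($c$, $d$, $r$) changes both $\rho_<$ and $rk$, and then arguing that the fixed-point-free constraint on the endpoints together with the minimality of the length gap leaves no room for any alternative. I would organize this by first proving the numerical lemma "$\rho_<$ goes up by $1$ per $PI_n$-cover; $rk$ is unchanged by a $c$-cover, drops by $1$ on a $d$-cover, and drops by $0$ or $1$ on an $r$-cover", then substituting into \eqref{A:a formula for ell} to get $1 = m - \tfrac{1}{2}(\text{total rank drop})$, i.e. the total rank drop is $2(m-1)$; for $m=2$ this is a drop of $2$, achievable only by either two rank-dropping $r$-covers or one $d$-cover plus one rank-dropping $r$-cover, and the former is excluded because two successive $r$-covers from a fixed-point-free matrix cannot return to a fixed-point-free matrix without an intervening diagonal entry (again visible from Examples~\ref{E:3} and \ref{E:4}). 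The remaining steps — uniqueness of $z$ and the ordering — then follow from the rank-control matrix interlacing. All other cases ($m \geq 3$ is excluded since the rank can drop by at most $2n$ but the tight relation already caps $m$, and the $c$-cover classification) reduce to citations of Incitti's tables and of \cite{CanCherniavskyTwelbeck}.
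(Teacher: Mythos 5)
Your outline is essentially correct, but it takes a genuinely different route from the paper. The paper argues by contradiction: assuming $x\rightarrow y$ in $PF_n$ is not a $PI_n$-cover and that no intermediate $z$ of the stated form exists, it notes that the open interval $(y,x)$ in $PI_n$ lies in $PI_n\setminus PF_n$, rules out $c$- and $d$-covers for the first step up from $y$, and then eliminates the remaining unwanted possibility (an $r$-move placing \emph{both} $1$'s on the diagonal) by explicitly constructing an alternative element $z_1\in PF_n$ inside the interval via a second $r$-move on the same pair (as in Example~\ref{E:y to z}) and comparing rank-control matrices; what survives is exactly the $r$-then-$d$ factorization, contradicting the assumption. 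You instead run a quantitative argument on an arbitrary saturated $PI_n$-chain from $y$ to $x$: the length of the chain $m$ and the total rank drop are tied by $\ell_{PF_n}(x)-\ell_{PF_n}(y)=1$, giving total drop $2(m-1)$, and since a single $PI_n$-cover drops the rank by at most one, this forces $m\le 2$; the $m=1$ and $m=2$ cases are then settled by a short type analysis (citing the $F_{2n}$ classification of admissible rise types for the $c$-cover case). Your accounting buys transparency at two spots the paper treats tersely: it automatically excludes a first step by a crossing $ee$ $c$-cover, which \emph{does} create diagonal entries, so the paper's one-line appeal to Figure~\ref{CTofIncitti} is not by itself decisive there; and it explains why a single $d$-move from $z$ lands exactly on $x$. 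The paper's construction of $z_1$, in exchange, avoids any length bookkeeping.

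Three repairs are needed to make your write-up airtight. First, the per-cover increment should be stated for $\ell_{PI_n}$ (equivalently $\rho_\le$), not $\rho_<$: $\rho_<$ can stay constant across a $PI_n$-cover, and the paper's own display \eqref{A:a formula for ell} is loose on this point; the relation you actually need is $\ell_{PF_n}=\ell_{PI_n}-\tfrac{2n-rk}{2}$ on $PF_n$ together with $\ell_{PI_n}$ rising by one per cover. Second, a $d$-cover does \emph{not} always drop the rank (shifting a diagonal $1$ down the diagonal preserves it); you only need ``at most one per step'', and in the $m=2$ case the requirement of a drop of exactly one per step is precisely what forces the $d$-cover to push the diagonal $1$ out of the matrix. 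Relatedly, the reason a $d$-cover is impossible in the $m=1$ case (and as the first step up from $y$) is simply that a $d$-move requires the \emph{covered} element to have a nonzero diagonal entry, which $y\in PF_n$ lacks — not a parity-of-rank argument. Third, the exclusion of $m\ge 3$ should be deduced from $2(m-1)\le m$ (total drop $2(m-1)$, at most one unit per step), not from the crude bound that the rank can drop by at most $2n$, which caps nothing; with that one-line fix your plan goes through.
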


\begin{proof}
Obviously, if $x$ covers $y$ in $PI_n$ and if both $x$ and $y$ 
are members of $PF_n$, then $x$ covers $y$ in $PF_n$, also. 
Thus, the last assertion follows from Lemma~\ref{L:explicit covering relations for PI_n}

We proceed with the assumption that $x,y\in PF_n$ but $x$ does 
not cover $y$ in $PI_n$. Towards a contradiction, assume that there 
does not exists $z\in PI_n$ as in the conclusion of the lemma. 
This means that the open interval $(y,x)=\{ z\in PI_n:\ y< z < x\}$ lies in 
$PI_n \setminus PF_n$.
In other words, any $z\in (y,x)$ has to have a non-zero diagonal entry. 
This eliminates the possibility of $z\rightarrow y$ being a $c$-cover 
(see Figure~\ref{CTofIncitti}). Clearly, $z\rightarrow y$ cannot be 
a $d$-cover, neither.

We continue with the assumption that $z$ is obtained from $y$ by an $r$-move,  
which places two symmetric entries on the diagonal. 
In this case, another $r$-move is possible in $y$ involving the same 1's. 
(To construct an example to this situation, start with $y$ as in Example~\ref{E:y to z}.)
Let $z_1$ denote this new element from $PF_n$. 
By comparing their rank-control matrices, we see that $Rk(x) < Rk(z_1)$, 
hence $y < z_1 < x$. This contradicts with our assumption that the interval 
$(y,x)$ lies in $PI_n \setminus PF_n$. Therefore, $z$ covers $y$ by an $r$-move, 
by deleting a 1 from $y$ and placing another to diagonal. 
Then by a $d$-move removing this diagonal 1 we obtain $x$. 
Thus we obtain a contradiction to our initial assumption. 

\end{proof}

\begin{Remark}\label{R:one line notation}
Let $x$ and $y$ be two elements from $PF_n$ such that $x$ covers 
$y$ by an $r$-move. Let $x=(x_1,\dots,x_n)$ and $y=(y_1,\dots, y_n)$ 
denote $x$ and $y$ in one-line notation. Then exactly one of the following 
statements is true:
\begin{enumerate}
\item $x$ is obtained from $y$ by replacing exactly two entries of $y=(y_1,\dots, y_n)$ by 0's.
\item There exists $i\in [n]$ such that $x$ is obtained from $y$ by replacing $y_i$ by the number 
$x_i$, setting $y_i$-th entry of $y$ to 0 and replacing the $x_i$-th entry of $y$ (which is a 0) by $i$.
\end{enumerate}

\end{Remark}


In the light of Lemma~\ref{L:coverings of PF_n} we make the following definition.
\begin{Definition}\label{ELPFPFI}
\begin{enumerate}
\item If the covering relation is derived from a $c$-move, then we use the 
labeling as defined in \cite{CanTwelbeck} and transform this 
label $(i,j)$ into $(n-i,n-j)$.
\item If the covering relation $x\rightarrow y$ results from an $r$-move, 
then we define the label to be $(i+n,j)$, where $x>y$ results from $y$ 
by moving the 1 in column $i$ to row $j$. 
If the 1 is pushed out of the matrix, then we set $j=n+1$.
\end{enumerate}
\end{Definition}
In the case of invertible fixed-point-free involutions we show in 
\cite{CanCherniavskyTwelbeck} that the lexicographically largest chain 
is the only decreasing chain.
Since the label is transformed from $(i,j)$ to $(n-i,n-j)$
 now the lexicographically smallest chain is increasing.
The reason the label of $r$-moves is shifted by $n$ in the first coordinate 
is to ensure that every $r$-cover has a bigger label than any $c$-cover.
In Figure~\ref{F:Labeling of PF4}, we illustrate the Definition~\ref{ELPFPFI}.
\begin{figure}[htp]
\begin{center}
\begin{tikzpicture}[scale=.3]
\node at (0,0) (a) {$(2,1,4,3)$};
\node at (-6,5) (b1) {$(3,4,1,2)$};
\node at (6,5) (b2) {$(2,1,0,0)$};
\node at (-6,10) (c1) {$(4,3,2,1)$};
\node at (6,10) (c2) {$(3,0,1,0)$};
\node at (-6,15) (d1) {$(4,0,0,1)$};
\node at (6,15) (d2) {$(0,3,2,0)$};
\node at (0,20) (e) {$(0,4,0,2)$};
\node at (0,25) (f) {$(0,0,4,3)$};
\node at (0,30) (g) {$(0,0,0,0)$};
\node[red] at (-3,2.5) {\scriptsize{(3,0)}};
\node[red] at (3,2.5) {\scriptsize{(8,5)}};
\node[red] at (-6,7.5) {\scriptsize{(3,2)}};
\node[red] at (0,7.5) {\scriptsize{(8,5)}};
\node[red] at (6,7.5) {\scriptsize{(5,3)}};
\node[red] at (-6,12.5) {\scriptsize{(7,5)}};
\node[red] at (6,12.5) {\scriptsize{(7,2)}};
\node[red] at (-2,14) {\scriptsize{(5,4)}};
\node[red] at (2,14) {\scriptsize{(8,5)}};
\node[red] at (-3,17.5) {\scriptsize{(8,2)}};
\node[red] at (3,17.5) {\scriptsize{(6,4)}};
\node[red] at (0,22.5) {\scriptsize{(8,3)}};
\node[red] at (0,27.5) {\scriptsize{(8,5)}};
\draw[-, very thick,color=blue](a) -- (b1);
\draw[-,thick](a) -- (b2);
\draw[-, very thick,color=blue](b1) -- (c1);
\draw[-,thick](b1) -- (c2);
\draw[-,thick](b2) -- (c2);
\draw[-, very thick,color=blue] (c1) -- (d1);
\draw[-,thick](c1) -- (d2);
\draw[-,thick](c2) -- (d1);
\draw[-,thick](c2) -- (d2);
\draw[-, very thick,color=blue] (d1) -- (e);
\draw[-,thick] (d2) -- (e);
\draw[-, very thick,color=blue] (e) -- (f);
\draw[-, very thick,color=blue] (f) -- (g);
\end{tikzpicture}
\end{center}
\caption{The EL-labeling of $PF_4$.}
\label{F:Labeling of PF4}
\end{figure}
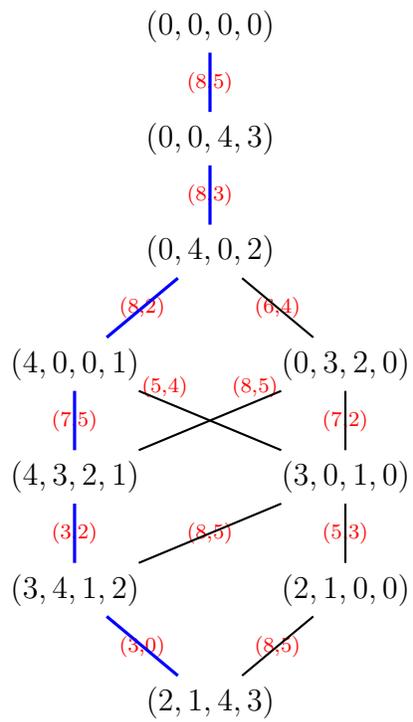


\begin{Proposition}\label{P:first part}
Let $y< x$ be two partial fixed-point-free involutions from $PF_n$, 
and let 
$$
\mf{c}: x = x_1 < x_2 < \cdots < x_{s+1} = y
$$
denote the maximal chain whose sequence of labels $f(\mf{c})$, 
as defined in Definition~\ref{ELPFPFI}, is lexicographically smallest among 
all such sequences. Then $f(\mf{c})$ is a weakly increasing sequence. 
\end{Proposition}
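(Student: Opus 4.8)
The plan is to argue by induction on the length $s$ of the chain $\mf{c}$, with the base case $s=1$ being trivial. For the inductive step, the crux is to show that the \emph{first} label $f(x_1,x_2)$ of the lex-smallest chain cannot exceed $f(x_2,x_3)$; once this is established, the subchain $x_2<\cdots<x_{s+1}$ is itself the lex-smallest maximal chain of the interval $[y,x_2]$ (any lex-smaller chain there would, when prepended with $x_1<x_2$, contradict minimality of $\mf{c}$ in $[y,x]$), so the induction hypothesis finishes the job. Thus the entire argument reduces to a local statement about the bottom two covering steps.

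To analyze the bottom two steps I would split into cases according to the types in Lemma~\ref{L:coverings of PF_n}: the covers $x\to z$ and $z\to y$ are each either a $c$-cover (non-crossing $ee$ or $ed$-rise), an $r$-cover, or the composite $d$-then-$r$ situation. Recall from Definition~\ref{ELPFPFI} that $c$-cover labels lie in $[n]\times[n]$ after the transformation $(i,j)\mapsto(n-i,n-j)$, while $r$-cover labels have first coordinate in $\{n+1,\dots,2n\}$; so an $r$-cover label is always lexicographically larger than any $c$-cover label. Therefore the only configuration that could violate weak increase is when $f(x_1,x_2)$ is an $r$-label and $f(x_2,x_3)$ is a $c$-label, or when both are $c$-labels (resp. both $r$-labels) but out of order. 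In the first dangerous case I would show that because $z\to y$ is a $c$-cover, there is an alternative factorization $x\to z'\to y$ in which $x\to z'$ is that very $c$-cover (the two moves act on disjoint sets of rows/columns, so they commute, using the rank-control-matrix criterion~(\ref{combinatorial_criterion}) to verify $z'\in PF_n$ and the covering relations); this produces a chain with lexicographically smaller first label, contradicting minimality of $\mf{c}$. The remaining cases — two $c$-covers out of order, or two $r$-covers out of order — are handled by the same commutation-and-reorder trick, invoking the corresponding statements already proved for $I_n$, $F_n$ in~\cite{Incitti04,CanCherniavskyTwelbeck} and for $PI_n$ in~\cite{CanTwelbeck} whenever both covers stay inside those subposets, and by a direct case check on the matrix moves (using Remark~\ref{R:one line notation} to track how entries move in one-line notation) when one of the covers is the genuinely new $d$-then-$r$ phenomenon.

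The main obstacle I anticipate is the bookkeeping in the mixed case where one of the two steps is an $r$-cover whose pushed $1$ interacts positionally with the other step: I need to confirm that the commuted intermediate element $z'$ is again a legitimate partial fixed-point-free involution (no diagonal entry, at most one nonzero per row/column) and that the labels attached to the reordered factorization are exactly the swapped pair of original labels. This requires carefully enumerating the finitely many ways two $r$-moves, or an $r$-move and a non-crossing $ee$/$ed$-rise, can overlap in their active indices, and checking in each subcase either that they commute cleanly or that the presumed ordering of labels already holds automatically. Throughout, I would lean on~(\ref{combinatorial_criterion}) and the length formula~(\ref{A:a formula for ell}) to certify that each constructed element has the right rank and sits in the right position of the poset, rather than manipulating matrices by hand.
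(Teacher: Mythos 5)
Your overall skeleton is sound and essentially the paper's: reduce to two consecutive covers with a descent (the paper does this by contradiction at an arbitrary descent position, you by induction localizing to the bottom two steps -- these are equivalent), observe that every $r$-label exceeds every $c$-label so a $c$-then-$r$ descent is impossible, and dispose of the $c$-$c$ case by the invertible fixed-point-free result. The gap is in your treatment of the remaining two cases ($r$-$r$ out of order, and $r$ followed by $c$). You assert that in the dangerous mixed case "the two moves act on disjoint sets of rows/columns, so they commute," with the fallback that in overlapping configurations "either they commute cleanly or the presumed ordering of labels already holds automatically." That dichotomy is not exhaustive, and in fact the overlapping configuration is the generic one: if the $r$-cover $x_1\rightarrow x_2$ really carries the lexicographically smallest label at $x_1$, then no suitable rise can survive in $x_1$, so the subsequent $c$-cover must involve one of the very $1$'s displaced by the $r$-move. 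In that situation the two moves do not commute, and the labels are genuinely out of order (the $r$-label is always larger than the $c$-label), so both branches of your dichotomy fail.

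What is needed there -- and what the paper's Case 4 (and the analogous subcase of its Case 2) actually does -- is a third mechanism: exhibit a \emph{different} cover of the bottom element, inside the interval, whose label is strictly smaller than the original $r$-label, contradicting lex-minimality directly. Concretely, if the $c$-cover corresponds to the suitable rise $(m,i-n)$, then the $r$-move labeled $(m+n,j)$ is already available at $x_1$ and $(m+n,j)<(i,j)$; if it corresponds to the rise $(m,j)$, then the $r$-move labeled $(m+n,i-n)$ is available and again smaller. A parallel construction (producing a move labeled $(m+n,l)$ with $m<j$) handles the non-interchangeable subcase of two $r$-covers with $k-n=j$. Your proposal, as written, has no source for these auxiliary smaller-labeled moves, so the case check you anticipate would stall exactly at the overlapping subcases. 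A secondary point to tighten: when you do produce such an alternative cover $z'$, you must also verify it lies in the interval $[y,x]$ (not merely that it is a valid element of $PF_n$ covering $x_1$), since otherwise it yields no competing chain and no contradiction.
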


\begin{proof} 

Towards a contradiction assume that $f(\mf{c})$ is not weakly increasing. 
Then there exist three consecutive terms
\begin{equation*}
x_{t-1} <  x_t <  x_{t+1}
\end{equation*}
in $\mathfrak{c}$ such that $f((x_{t-1},x_t)) > f((x_t,x_{t+1}))$.
We have 4 cases to consider:
\begin{center}
Case 1: $type((x_{t-1},x_t))=c$, and $type((x_t,x_{t+1}))=c$,\\
Case 2: $type((x_{t-1},x_t))=r$, and $type((x_t,x_{t+1}))=r$,\\
Case 3: $type((x_{t-1},x_t))=c$, and $type((x_t,x_{t+1}))=r$,\\
Case 4: $type((x_{t-1},x_t))=r$, and $type((x_t,x_{t+1}))=c$.\\
\end{center}

In each of these cases, we either produce an immediate contradiction 
by showing that the two moves are interchangeable
(hence $\mf{c}$ is not the smallest chain), or we construct an 
element $z\in [x,y] \cap PF_n$ which covers $x_{t-1}$, and such that
$f((x_{t-1},z)) < f((x_{t-1},x_t))$. Since we assume that
$f(\mf{c})$ is the lexicographically smallest Jordan-H\"{o}lder sequence, 
the existence of such an element $z$ is a contradiction, also.

To this end, suppose that the label of the first move ($x_t \rightarrow x_{t-1}$) 
is $(i,j)$, and the second move ($x_{t+1} \rightarrow x_{t}$) is labeled by $(k,l)$.

{\em Case 1:} Follows from the proof for invertible fixed-point-free involutions.

{\em Case 2:} If $i=k$, then $l>j$. In this case, we interchange the two moves 
to obtain our desired contradiction. Therefore we continue with assuming $k<i$.
If $k-n=j$ then $j<i-n$ and $(m+n,l)$ is possible in $x_{t-1}$ with $m<j<i$, where 
$(m,i-n)$ is the position of the 1 in $x_{t-1}$.
If $k-n \neq j$ then either the two moves are interchangeable, or $(k,l)$ removes 
a suitable rise in $x_{t-1}$ which corresponds to a move with a smaller label than $(i,j)$.

{\em Case 3:} This case is impossible since every $c$-move has a smaller 
label than any $r$-move.

{\em Case 4:} If the $r$-cover labeled $(i,j)$ is the covering relation with the 
lexicographically smallest label then there is no suitable rise in $x_{t-1}$.
The $c$-move has to involve one of the moved 1's since otherwise there is 
a suitable rise in $x_{t-1}$. For this, one of the moved 1's has to have a 1 to 
the upper left or the lower right in $x_t$ that was not to the upper left or lower 
right of it in $x_{t-1}$. Since the 1's are moved right and down respectively, 
it is impossible that there is a 1 to the lower right in $x_t$ that is not to the lower right in $x_{t-1}$.
If the $c$-cover corresponds to the suitable rise $(m,i-n)$ (with label $(n-m,i)$), 
then $(i,j)$ is not the $r$-move with the smallest label in $x_{t-1}$ since in this case 
$(m+n,j)$ is possible in $x_{t-1}$ with $(n+m,j)<(i,j)$.
If the $c$-cover corresponds to the rise $(m,j)$, then the $r$-move $(m+n,i-n)$ 
is possible in $x_{t-1}$ which again has a smaller label than $(i,j)$.

\end{proof}

\begin{Proposition}
We retain the notation from (the proof of) Proposition~\ref{P:first part}.
Then $f(\mf{c})$ is the unique increasing chain in $[y,x]$. 
\end{Proposition}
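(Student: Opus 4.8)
The strategy is to show that any increasing chain in $[y,x]$ must coincide with the lexicographically smallest one, $\mf{c}$. Suppose $\mf{c}' : x = x_1' < x_2' < \cdots < x_{s+1}' = y$ is an increasing chain, and let $t$ be the first index at which $\mf{c}'$ differs from $\mf{c}$, so $x_i = x_i'$ for $i \le t-1$ but $x_t \ne x_t'$. Since $\mf{c}$ is lexicographically smallest, $f((x_{t-1}, x_t)) \le f((x_{t-1}, x_t'))$, and since the two covers out of $x_{t-1}$ are distinct we may write $f((x_{t-1},x_t)) = (i,j)$ and $f((x_{t-1},x_t')) = (k,l)$ with $(i,j) < (k,l)$ lexicographically. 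The goal is to derive a contradiction from the assumption that $\mf{c}'$ is increasing: I will show that the label $(i,j)$ reappears, in a position where $\mf{c}'$ is already forced to have used a label $\ge (k,l) > (i,j)$, violating weak increase.

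\textbf{Key steps.} First I would reduce to understanding what happens along $\mf{c}'$ after the split: the $(i,j)$-move that $\mf{c}$ performs from $x_{t-1}$ is still "available" in $x_t' = x_{t-1}$ modified by the $(k,l)$-move, because the two moves act on essentially disjoint data (this is the commutation analysis already carried out in the proof of Proposition~\ref{P:first part}, organized by the four cases $cc$, $rr$, $cr$, $rc$). In each case one checks that either the $(i,j)$-cover survives verbatim as a cover of $x_t'$, or it is replaced by a cover of $x_t'$ with a label still strictly smaller than $(k,l)$ — for $c$-moves this is Incitti's commutation analysis for $I_n$ restricted to the non-crossing $ee$- and $ed$-rises, lifted through the relabeling $(i,j)\mapsto (n-i,n-j)$; for $r$-moves and mixed $r/c$ moves it is exactly the positional bookkeeping in Remark~\ref{R:one line notation} and the case analysis of Proposition~\ref{P:first part}. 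Second, I would push this forward inductively along $\mf{c}'$: at each subsequent step the "pending" small label continues to be available (or gets replaced by an even smaller one), until it is finally executed at some step $t' > t$. Third, since $\mf{c}'$ was already forced to apply a label $\ge (k,l)$ at step $t$ (to leave $x_{t-1}$ toward $x_t'$), and at step $t'$ it applies a label $< (k,l)$, the sequence $f(\mf{c}')$ has a strict descent between positions $t$ and $t'$, contradicting the hypothesis that $\mf{c}'$ is increasing. Hence no such $\mf{c}'$ exists, so $\mf{c}$ is the unique increasing chain.

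\textbf{Main obstacle.} The delicate point is the persistence claim: that the small label $(i,j)$, once "skipped," remains the label of some cover of every intermediate element of $\mf{c}'$ until it is used. This requires that the $(k,l)$-move and all further larger-labeled moves of $\mf{c}'$ do not destroy the configuration of $1$'s that makes the $(i,j)$-move legal. The subtlety is that $r$-moves can push a $1$ onto the diagonal or out of the matrix, and that the $PF_n$ covering relations sometimes decompose (Lemma~\ref{L:coverings of PF_n}) as an $r$-cover followed by a $d$-cover in $PI_n$; I would need to verify that such a decomposition of a later step cannot interfere with the pending $(i,j)$ columns/rows. I expect this to follow because once one is at $x_{t-1}$ the relevant $1$'s for the $(i,j)$-move lie in a sub-region that larger-labeled moves (which have strictly larger first coordinate, hence operate on columns at least as far right, or are $c$-moves whose effect on the rank-control matrix is governed by the Incitti picture) cannot touch without themselves having a smaller label — precisely the contradiction-generating phenomenon. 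Carrying the four-case analysis of Proposition~\ref{P:first part} through, but now tracking that the replacement label stays below $(k,l)$ rather than below $(i,j)$, is the bulk of the work; conceptually it is the same computation, reused.
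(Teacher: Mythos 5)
Your overall route differs from the paper's: the paper proves uniqueness by induction on the length of $[y,x]$, so that it only ever has to compare the \emph{first} steps of the two chains --- the tail of the alleged second increasing chain $\mf{c}'$ is, by the induction hypothesis together with Proposition~\ref{P:first part}, the lexicographically smallest chain of the shorter interval, and the whole proof then reduces to a single-step construction: in each of the four type-combinations ($cc$, $rr$, $cr$, $rc$) one exhibits one element $z\le x$ covering $x_1'$ with $f((x_1',z))<f((x_1',x_2'))$, which contradicts that lex-minimality. Your plan replaces this induction by a propagation argument, and that is where it has genuine gaps. First, the pivotal step ``the pending small label continues to be available \dots\ until it is finally executed at some step $t'>t$'' is unjustified: nothing forces $\mf{c}'$ ever to execute the pending move, so the descent you want never materializes unless you additionally track that the pending cover stays \emph{inside the interval}, i.e.\ below $x$, at every stage; then the contradiction comes at the top, where the last element of $\mf{c}'$ is $x$ itself and cannot have a cover within $[y,x]$. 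You never address this containment, and without it the argument simply stops.

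Second, the persistence claim you defer to ``the same computation, reused'' is strictly stronger than anything established in Proposition~\ref{P:first part}. That proposition analyzes two \emph{consecutive} covers forming a descent; what you need is that an arbitrary later move of $\mf{c}'$, of any type and any larger label, and possibly of the composite kind described in Lemma~\ref{L:coverings of PF_n} (an $r$-cover followed by a $d$-cover in $PI_n$), neither destroys the configuration of $1$'s supporting the pending move nor pushes its target above $x$ --- and this must be verified at every step, not just immediately after the split. That the verification is not free is shown by the paper's own proof: even for the single step at the split it needs a fresh four-case analysis (e.g.\ producing the move labeled $(l+n,j)$ in the $rr$ case, and using that no $r$-move can remove a suitable rise in the mixed case), with details different from those in Proposition~\ref{P:first part}. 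Your one-step claim at the split point is essentially that analysis and is fine as far as it goes; but as written the multi-step persistence and the forcing mechanism are missing, and the cleanest repair is to adopt the interval-length induction, which makes them unnecessary.
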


\begin{proof}

We use induction on the length $s+1$ of the interval $[y,x]$ 
to prove that no other chain is lexicographically increasing.
Clearly, if $x$ covers $y$, there is nothing to prove, so, 
we assume that for any interval of length $k\leq s$ 
there exists a unique increasing maximal chain.

Assume that there exists another increasing chain
\begin{equation*}
\mathfrak{c}': y=x_0< x_1' < \cdots <x_s' <  x_{s+1}=x.
\end{equation*}
Since the length of the chain
\begin{equation*}
x_1'< \cdots < x_s' < x_{s+1}=x
\end{equation*}
is $s$, by the induction hypothesis, it is the lexicographically smallest chain 
between $x_1'$ and $x$.
We are going to find contradictions to each of the following possibilities:
\begin{center}
Case 1: $type(x_0,x_1)=c$, and $type(x_0,x_1')=c$,\\
Case 2: $type(x_0,x_1)=r$, and $type(x_0,x_1')=r$, \\
Case 3: $type(x_0,x_1)=c$, and $type(x_0,x_1')=r$,\\
Case 4: $type(x_0,x_1)=r$, and $type(x_0,x_1')=c$.\\
\end{center}

In each of these cases we will construct a partial fixed-point-free involution $z \in [y,x]$ such
that $z$ covers $x_1'$ and $f((x_1',z))<f((x_1',x_2'))$, contradicting the induction hypothesis.
To this end, let $f((x_0,x_1))=(i,j)$, $f((x_0,x_1'))=(k,l)$ and assume that $(k,l)<(i,j)$.

{\em Case 1:} Done in the proof for the invertible case.

{\em Case 2:} It is impossible for $i=k$ since there is only one $r$-move for each 1.
Therefore assume that $i<k$. Let the moved 1's be on the symmetric positions 
$(i-n,m)$ and $(m,i-n)$ in $x_0$. If $k=m+n$ then $(l+n,j)$ is possible in $x_1'$ 
with $(l+n,j)<(k,l)$. If $ k \neq m$ then either the two moves are interchangeable 
or the suitable rise $(n-i,n-k)$ is possible in $x_1'$.

{\em Case 3:} Since no $r$-move can remove a suitable rise, 
there exists a legal $c$-move in $x_1'$. But this $c$-move has a 
smaller label than $(k,l)$ which is our desired contradiction.

{\em Case 4:} This case is not possible because every $c$-move has 
a smaller label than any $r$-move.

\end{proof}

Combining previous two propositions, we have our first main result:
\begin{Theorem}\label{T:first main}
The poset $PF_n$ is an EL-shellable poset.
\end{Theorem}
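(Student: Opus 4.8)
The plan is to assemble the theorem directly from the two preceding propositions together with the structural description of coverings in Lemma~\ref{L:coverings of PF_n}. Recall that a finite graded bounded poset is EL-shellable precisely when it admits an edge labeling $f: C(P) \to \varGamma$ into a totally ordered set such that every interval $[y,x]$ contains a unique weakly increasing maximal chain, and moreover that chain is lexicographically smallest among all maximal chains of $[y,x]$. The labeling we use is the one introduced in Definition~\ref{ELPFPFI}, with values in $\varGamma = ([2n]\cup\{n+1\})\times([n]\cup\{n+1\})$ ordered lexicographically. So the proof is really just the sentence: ``By Proposition~\ref{P:first part} and the following proposition, the labeling $f$ of Definition~\ref{ELPFPFI} is an EL-labeling of $PF_n$; hence $PF_n$ is EL-shellable.''

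To make this airtight I would first check that $PF_n$ genuinely satisfies the standing hypotheses on $P$: it is finite, it is graded with rank function $\ell_{PF_n}$ given by \eqref{A:a formula for ell}, and it has a $\hat 0$ (the element $\omega_0$ with the maximal rank-control matrix, exhibited in the Example) and a $\hat 1$ (the zero matrix). Gradedness and the covering description are already recorded in Section~\ref{S:coverings}; boundedness follows from the combinatorial criterion \eqref{combinatorial_criterion}. Next I would observe that $f$ is well defined on all of $C(P)$: by Lemma~\ref{L:coverings of PF_n} every cover of $PF_n$ is either a $c$-cover coming from a non-crossing $ee$- or $ed$-rise, or an $r$-cover (possibly one that pushes a $1$ out of the matrix, handled by the $j=n+1$ convention), or a composite $d$-then-$r$ pair — but the last kind is not a cover of $PF_n$ at all, it is a length-two interval, so $f$ only needs values on the first two kinds, exactly what Definition~\ref{ELPFPFI} provides. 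The shift by $n$ in the first coordinate of $r$-labels guarantees $r$-labels exceed all $c$-labels, a fact used repeatedly in the case analyses.

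With these preliminaries in place, the proof concludes as follows. Fix an interval $[y,x]$ in $PF_n$. Proposition~\ref{P:first part} shows that the maximal chain $\mf c$ whose label sequence is lexicographically smallest has a weakly increasing label sequence, so an increasing maximal chain exists in $[y,x]$. The second Proposition shows this increasing chain is unique. Combining the two, $f$ is an $R$-labeling, and since the unique increasing chain is by construction the lex-smallest one, $f$ is in fact an EL-labeling in the sense recalled in Subsection~\ref{S:posetterminology}. Therefore $PF_n$ is EL-shellable. I do not expect any genuine obstacle here: all the real work — the four-case analyses comparing consecutive $c/r$ labels, the interchangeability arguments, and the ``produce a smaller label $z$'' arguments — is done in the two propositions, and the only thing left is to confirm that the formal bookkeeping (bounded, graded, $f$ defined everywhere, $r$-labels above $c$-labels) matches the definition of EL-shellability. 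If there is a subtle point, it is verifying that the composite $d$-then-$r$ intervals of length two from Lemma~\ref{L:coverings of PF_n} do not secretly need their own label; but these are intervals, not covers, so they are irrelevant to the definition of $f$ on $C(PF_n)$, and their internal structure is exactly what the second Proposition's induction handles.
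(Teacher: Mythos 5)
Your proposal takes exactly the paper's route: the paper's entire proof of Theorem~\ref{T:first main} is the single sentence ``combining the previous two propositions, we have our first main result,'' and your assembly of Proposition~\ref{P:first part} (existence of an increasing, lex-smallest chain) with the uniqueness proposition is precisely that argument, fleshed out with the routine checks that $PF_n$ is finite, graded, and bounded and that the labeling of Definition~\ref{ELPFPFI} is defined on every cover.

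One remark in your bookkeeping is wrong, though it does not derail the proof. You assert that the composite $d$-then-$r$ configurations of Lemma~\ref{L:coverings of PF_n} are ``not covers of $PF_n$ at all'' but length-two intervals, and hence need no label. In fact the opposite is true: the whole point of that lemma is that such a configuration \emph{is} a single covering relation of $PF_n$, because the intermediate element $z$ has a nonzero diagonal entry and therefore lies in $PI_n\setminus PF_n$; it is a length-two interval only in $PI_n$. These covers do need labels, and Definition~\ref{ELPFPFI} supplies them by treating the net effect as an $r$-move (this is where the $j=n+1$ convention for a $1$ pushed out of the matrix is used; see the labels $(8,5)$ in Figure~\ref{F:Labeling of PF4} for $n=4$). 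Since the two propositions' case analyses already cover $r$-labeled edges of this kind, your conclusion stands once this aside is corrected.
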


\section{The order complex of $PF_{n}$}
\label{S:order complex}

In~\cite{CanCherniavskyTwelbeck}, it is shown that the order complex $\Delta(F_n)$ 
of fixed-point-free involutions triangulates a ball of dimension $n^2-n-2$. 
In this section we obtain a similar result for $PF_n$. 

\begin{Lemma}\label{L:dimension formula} For all $n\geq 2$, 
$$
\dim \Delta(PF_n)=\ell (PF_n)=n+(n-1)+\dots+1-n={n\choose2}.
$$
\end{Lemma}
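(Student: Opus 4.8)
The plan is to reduce the dimension computation to the length of a longest maximal chain and then to evaluate the length function of $PF_n$ at its two bounding elements. Since every poset considered in the paper is graded, bounded, and has $\hat 0,\hat 1$, every maximal chain of $PF_n$ runs from $\hat 0$ to $\hat 1$ and contains exactly $\ell(PF_n)+1$ elements; such a chain is a face of $\Delta(PF_n)$ of cardinality $\ell(PF_n)+1$, and every chain is contained in a maximal one, so at once $\dim\Delta(PF_n)=\ell(PF_n)$. Since $\ell_{PF_n}(\hat 0)=0$ by definition of the length function, it remains only to prove $\ell_{PF_n}(\hat 1)=\binom{n}{2}$.

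Next I would identify $\hat 1$. The zero matrix lies in $PF_n$ (it satisfies Cherniavsky's three conditions vacuously), and its rank-control matrix $Rk(0)$ is the zero matrix, hence entrywise minimal among all rank-control matrices. By the combinatorial criterion $x\leq_{Skew} y\iff Rk(y)\leq Rk(x)$ it follows that the zero matrix is the (unique) maximal element of $(PF_n,\leq_{Skew})$; that is, $\hat 1=0$. This is precisely dual to the fact recalled in Section~\ref{S:posetterminology}--\ref{S:combinatorial approach} that $\hat 0=\omega_0$ is the element with the largest rank-control matrix.

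Finally I would compute $\ell_{PF_n}(\hat 1)$ using the identification of $\ell_{PF_n}$ with the restriction of $\rho_<$. For $x=0$ every leading-submatrix rank vanishes, $r_{i,j}=0$, consistently with the convention $r_{0,i}=0$; hence the condition $r_{i,j}=r_{i-1,j-1}$ holds for every pair $(i,j)$, and
$$
\ell_{PF_n}(\hat 1)=\rho_<(0)=\#\{(i,j):\ 1\leq i<j\leq n\}.
$$
To match the form in the statement, observe that $\rho_\leq(0)=\#\{(i,j):\ 1\leq i\leq j\leq n\}=n+(n-1)+\cdots+1$, and that this exceeds $\rho_<(0)$ exactly by the $n$ diagonal pairs $(i,i)$ (each of which satisfies $r_{i,i}=r_{i-1,i-1}=0$); therefore $\ell_{PF_n}(\hat 1)=\big(n+(n-1)+\cdots+1\big)-n=\binom{n}{2}$, and $\dim\Delta(PF_n)=\ell(PF_n)=\binom{n}{2}$.

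I do not anticipate a real obstacle: the only points requiring care are the (standard, but worth stating) equality $\dim\Delta(P)=\ell(P)$ for graded bounded $P$, the identification of the top element $\hat 1=0$, and the bookkeeping of the boundary convention $r_{0,i}=0$ in the definition of $\rho_<$. A longer alternative would bypass the $\rho_<$ formula entirely by exhibiting one explicit saturated chain from $\omega_0$ to $0$ — repeatedly draining the $1$'s of $\omega_0$ by $r$-moves (with the occasional $d$-move) and counting that this takes $\binom{n}{2}$ steps — but the length-function argument above is shorter and self-contained.
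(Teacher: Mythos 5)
Your proof is correct and takes essentially the paper's route: the paper's one-line proof likewise just evaluates the length formula recalled in Subsection~\ref{S:combinatorial approach} at the top element, the zero matrix (where $\ell_{PI_n}$ equals $n+(n-1)+\cdots+1$ and one subtracts $\tfrac{2n-rk(0)}{2}=n$), and your computation of $\rho_<$ at the zero matrix is the same evaluation phrased through the equivalent statement $\ell_{PF_n}=\rho_<|_{PF_n}$. The details you make explicit --- that $\dim\Delta(P)=\ell(P)$ for a graded bounded poset and that the zero matrix is $\hat 1$ by the rank-control criterion --- are exactly what the paper's ``straightforward'' leaves implicit.
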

\begin{proof}
Straightforward by using~(\ref{A:a formula for ell}).
\end{proof}

We continue by analyzing the intervals of length two.
\begin{Lemma}\label{L:at most two}
Each length two interval $[y,x] \subseteq PF_n$ has at most four, at least three elements. 
\end{Lemma}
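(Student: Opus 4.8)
The plan is to treat $PF_n$ as almost-a-subposet of $PI_n$ and to use the structure of covering relations from Lemma~\ref{L:coverings of PF_n}. First I would recall that in $PI_n$ every length-two interval has exactly three or four elements (this is essentially Incitti's diamond lemma for $I_n$ extended to $PI_n$ in \cite{CanTwelbeck}, together with Lemma~\ref{L:explicit covering relations for PI_n}), so the intervals of length two are either ``diamonds'' (four elements) or ``chains with a unique middle'' (three elements). The goal is to show that passing to $PF_n$ never collapses such an interval below three elements and never blows it up beyond four. Upper bound: if $[y,x]\subseteq PF_n$ is an interval of length two (with respect to $\ell_{PF_n}$), then by Lemma~\ref{L:coverings of PF_n} each cover $x\to z\to y$ inside it is either a cover already in $PI_n$, or an $r$-cover (in $PI_n$) followed by a $d$-cover in which the rank drops by one at each step. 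In the first situation the elements of $[y,x]_{PF_n}$ that sit strictly between are among those strictly between $y$ and $x$ in $PI_n$, so there are at most two of them. In the second situation any such $z$ is obtained from $x$ by a $d$-move then refined to $y$ by an $r$-move; I would argue that at most two choices of the intermediate diagonal position / pushed pair are possible, again giving at most four elements total.

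For the lower bound (at least three elements), I would show that every length-two interval in $PF_n$ contains at least one element strictly between. Using the formula (\ref{A:a formula for ell}) for $\ell_{PF_n}$ in terms of $\rho_<$, an interval of length two means $\rho_<(x)+2=\rho_<(y)$ for $y<x$, equivalently the rank-control matrices satisfy $Rk(x)\leqslant Rk(y)$ with the corresponding count of coincidences $r_{i,j}=r_{i-1,j-1}$ differing by $2$. The key point is that one can always ``factor'' the passage from $Rk(x)$ to $Rk(y)$ through a single intermediate rank-control matrix of a genuine partial fixed-point-free involution: I would take the cover $x\to z$ inside $PF_n$ guaranteed by EL-shellability (Theorem~\ref{T:first main} is not yet available at this point, so instead I would invoke that $PF_n$ is graded — which is already recorded in Section~2 via the length function $\ell_{PF_n}$ — so every length-two interval, being graded of length two, automatically has at least one element of length $\ell_{PF_n}(x)+1$ strictly inside). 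That is really the crux: gradedness of $PF_n$ forces the middle rank to be attained, hence $|[y,x]|\geq 3$ immediately, and no separate argument is needed.

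So the real content is the upper bound of four, and within that the main obstacle is the ``mixed'' case where one leg $x\to z$ is a genuine $PI_n$-cover (an $r$-cover, or a $c$-cover of non-crossing $ee$ or $ed$ type) while the other leg $z\to y$ is of the composite $d$-then-$r$ type, or more delicately the case where both legs are composite. I would handle this by a careful case analysis on the one-line notations, using Remark~\ref{R:one line notation} to see exactly which pairs of entries are altered by an $r$-move, and showing that two composite moves from $x$ down to $y$ either commute (producing a genuine diamond with four elements) or coincide (producing a three-element interval), but never create a third distinct intermediate element. The bookkeeping is routine once the moves are written in one-line notation; I expect the proof to reduce to checking a short list of local configurations of the $4$ or so entries of $x$ that participate, exactly as in the covering-relation analysis of Lemma~\ref{L:coverings of PF_n}, so I would present it as a finite case check rather than a general argument.
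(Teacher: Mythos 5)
Your lower-bound argument (gradedness of $PF_n$ forces an element of intermediate length, hence at least three elements) is fine and is exactly what the paper leaves implicit. The problem is the upper bound, which is the actual content of the lemma, and there your plan has two genuine gaps. First, you lean on the claim that every length-two interval of $PI_n$ has at most four elements. That claim is not proved in this paper, and it does not follow from the EL-shellability of $PI_n$ established in \cite{CanTwelbeck}: EL-shellability only forces a unique increasing chain in each interval and puts no bound on the number of middle elements of a length-two interval. The paper deliberately avoids needing any such fact about $PI_n$: in the case where both covers are $c$-covers it identifies $[y,x]$ with an interval of $F_m$ (which sits inside the Eulerian poset $I_m$), and in all other cases it argues directly inside $PF_n$.

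Second, the cases that carry the real difficulty --- both covers of $r$-type, and the mixed $c$/$r$ cases --- are precisely where the paper does concrete work: an analysis in one-line notation of which entries an $r$-move alters, showing that at most one additional middle element can exist, and in the mixed case an argument that any second middle element must be reached by an $r$-move from $y$ followed by a $c$-move, and that such an element is unique. Your proposal defers all of this to ``routine bookkeeping'' and a ``finite case check'' that is never exhibited, so the heart of the proof is missing. There is also a conceptual slip in your treatment of the composite covers of Lemma~\ref{L:coverings of PF_n}: the intermediate element $z$ appearing in the factorization of a single $PF_n$-cover through $PI_n$ lies in $PI_n\setminus PF_n$ and is \emph{not} a middle element of the interval $[y,x]\subseteq PF_n$; counting the possible factorizations of one cover (``at most two choices of the intermediate diagonal position'') does not bound the number of elements $w\in PF_n$ with $y<w<x$, which is what the lemma requires. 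As written, the proposal is a plausible plan in the same general spirit as the paper (a case analysis on the types of the two covers), but the decisive counting arguments are absent or rest on an unavailable auxiliary claim.
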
 
\begin{proof}
Just as in the proof of Theorem~\ref{T:first main}, if $y< z < x$, then 
there are 4 cases to consider:
\begin{center}
Case 1: $type((y,z))=c$, and $type((z,x))=c$,\\
Case 2: $type((y,z))=r$, and $type((z,x))=r$,\\
Case 3: $type((y,z))=c$, and $type((z,x))=r$,\\
Case 4: $type((y,z))=r$, and $type((z,x))=c$.\\
\end{center}
In the first case, $[y,x]$ is isomorphic to an interval in $F_m$ for some $m\leq n$, 
and therefore, it has at most 4 elements (since $F_m$ is a connected subposet of $I_m$,
which is Eulerian).

In the second case, we look at the one-line notations of $y$ and $x$. 
See~\ref{R:one-line notation}. 
If $z$ is obtained from $y$ by setting two non-zero entries of $y$ to 0's, and if,
at the same time, $x$ is obtained from $z$ by setting two non-zero entries of $z$ to 0's,
then $y$ and $x$ differ at exactly 4 entries. Therefore, $[y,x]$ contains at most one other 
element other than $z$, which is obtained from $y$ by setting two entries of $y$ to 0's. 
If $z$ is obtained by increasing the $i$-th entry $y_i$ of $y$ to $z_i$, 
and if, at the same time, $x$ is obtained from $z$ by increasing the $i$-th
entry $z_i$ of $z$ to $x_i$, then $[y,x]$ has exactly 3 elements. 
If $z$ is obtained from $y$ by increasing the $i$-th entry $y_i$ of $y$ to $z_i$, 
and if $x$ is obtained from $z$ with no overlap with the replaced/increased entries of $y$,
then $[y,x]$ has exactly 4 elements. 
Finally, if $z$ is obtained by increasing the $i$-th entry $y_i$ of $y$ to $z_i$, 
and $x$ is obtained from $z$ by replacing the $z_i$-th entry of $z$ by 0,
then $y$ and $x$ differ at exactly at 4 positions. Therefore, the interval $[y,x]$
have at most 4 elements.

Since the arguments of Case 3 and Case 4 are identical, we handle Case 3 only. 
Suppose that there exist more than 4 elements in $[y,x]$. Since one of the elements
$y < z < x$ is obtained from $y$ by a $c$-move, the covering type of any other 
$y< z_1 < x$ is not of type $c$. Otherwise, to obtain $x$ from $z$ we need to 
apply another $c$-move to $z$. But then the matrix ranks of $y$ and $x$ would be the 
same. Therefore, we conclude that if $z_1 \neq z$ and $y< z_1 < x$, then $z_1$
is obtained from $y$ by an $r$-move, and $x$ is obtained from $z_1$ by a $c$-move. 
Now it is clear that it is impossible to have another element $y< z_2 < x$ such that 
$z_2$ covers $y$ by an $r$-move and $z_2 \notin \{ z, z_1 \}$. 
Therefore $[y,x]$ have exactly 4 elements and the proof is complete. 

\end{proof}

We know from~\cite{DK74} that a pure, shellable simplicial complex $\Delta$ of which 
every $\dim \Delta -1$ face is contained in at most two facets is homeomorphic 
to either a ball, or a sphere. 
By Lemma~\ref{L:at most two}, we see that $\Delta(PF_n)$ satisfies this property.

\begin{Theorem}\label{T:second main}
Let $\widetilde{PF}_n$ denote the proper part of $PF_n$, 
namely the subposet obtained from $PF_n$ by removing its smallest
and the largest elements. 
For $n\geq 3$, the order complex $\Delta(\widetilde{PF}_n)$ triangulates a ball of dimension 
$\dim \Delta(PF_n)-2={n\choose2}-2$.
\end{Theorem}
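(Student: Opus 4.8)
The plan is to use the standard criterion connecting EL-shellability to the topology of the order complex of the proper part, together with the fact that each length-two interval has at most four elements. Recall from Theorem~\ref{T:first main} that $PF_n$ is EL-shellable, and recall the general principle (Bj\"orner) that for a graded EL-shellable poset $P$ with $\hat 0$ and $\hat 1$, the order complex $\Delta(\widetilde P)$ of the proper part is homotopy equivalent to a wedge of spheres, indexed by the decreasing maximal chains, each of dimension $\ell(P)-2$. Since $\Delta(PF_n)$ is pure of dimension $\binom n2$ (Lemma~\ref{L:dimension formula}), $\Delta(\widetilde{PF}_n)$ is pure of dimension $\binom n2 - 2$, as claimed for the dimension count.

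First I would pin down that $\Delta(\widetilde{PF}_n)$ is a shellable ball rather than a sphere (or a wedge with more than one sphere). By Lemma~\ref{L:at most two}, every length-two interval of $PF_n$ has three or four elements; equivalently, every codimension-one face of $\Delta(PF_n)$ lies in at most two facets. Restricting to the proper part preserves this (removing $\hat 0,\hat 1$ only deletes the vertices corresponding to those two elements and the faces through them), so every codimension-one face of the pure shellable complex $\Delta(\widetilde{PF}_n)$ is contained in at most two facets. By the Danaraj–Klee result \cite{DK74} cited just before the theorem, $\Delta(\widetilde{PF}_n)$ is therefore homeomorphic to a ball or a sphere. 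To exclude the sphere it suffices to exhibit a codimension-one face contained in only one facet, equivalently a length-two interval $[y,x]$ in $PF_n$ with exactly three elements: by the analysis in the proof of Lemma~\ref{L:at most two} (Case 2, the subcase where $z$ is obtained from $y$ by increasing $y_i$ to $z_i$ and $x$ from $z$ by increasing $z_i$ to $x_i$), such intervals exist for $n\ge 3$ — for instance one can take the two-step $r$-chain pushing a single $1$ down its column twice. Hence $\Delta(\widetilde{PF}_n)$ is a ball.

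Alternatively, and perhaps more cleanly, I would argue via the M\"obius function / Euler characteristic. By the theory of EL-shellability, $\widetilde\chi(\Delta(\widetilde{PF}_n)) = \mu_{PF_n}(\hat 0,\hat 1)$, and the number of decreasing maximal chains of $PF_n$ under the labeling of Definition~\ref{ELPFPFI} equals $(-1)^{\ell(PF_n)}\mu(\hat 0,\hat 1)$ up to sign. A shellable complex with at most two facets on each codimension-one face is a ball precisely when it has some free codimension-one face, i.e. precisely when it is not a pseudomanifold without boundary; combined with the Danaraj–Klee dichotomy this reduces the whole theorem to the single combinatorial fact that $[y,x]$ of length two with three elements occurs, which the proof of Lemma~\ref{L:at most two} already establishes. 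One can also note that $PF_n$ is not Eulerian (the three-element interval shows $\mu$ of a length-two interval can fail to be $1$), which already forbids the sphere.

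The main obstacle is making the passage from "$PF_n$ is EL-shellable" to "$\Delta(\widetilde{PF}_n)$ is a shellable ball" fully rigorous: one must check that the shelling order on the maximal chains of $PF_n$ induced by the EL-labeling restricts to a shelling of $\Delta(\widetilde{PF}_n)$ (this is standard — deleting $\hat 0$ and $\hat 1$ from each maximal chain gives the facets of $\Delta(\widetilde{PF}_n)$, in the same order, and shellability is inherited), and that the "at most two facets per ridge" property together with the existence of a free ridge is exactly the Danaraj–Klee hypothesis for a ball. Everything else — the dimension, the bound on length-two intervals, the existence of a three-element interval — is already in hand from Lemma~\ref{L:dimension formula}, Lemma~\ref{L:at most two}, and Theorem~\ref{T:first main}, so the write-up should be short.
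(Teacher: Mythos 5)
Your proposal is correct, and its first half is exactly the paper's argument: purity and the dimension count from Lemma~\ref{L:dimension formula}, shellability of $\Delta(\widetilde{PF}_n)$ induced by the EL-labeling of Theorem~\ref{T:first main}, and the ball-or-sphere dichotomy of \cite{DK74} via the bound on length-two intervals in Lemma~\ref{L:at most two}. Where you genuinely diverge is in ruling out the sphere: you exhibit a boundary ridge, i.e.\ a length-two interval with a unique interior element, so the complex cannot be a closed pseudomanifold; the paper instead shows the reduced Euler characteristic vanishes, using Hall's theorem and an induction in the dual poset to get $\mu_{PF_n}(\hat 0,\hat 1)=0$, which no sphere can have. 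Your route is more elementary and shorter; the paper's buys the explicit value $\mu_{PF_n}(\hat 0,\hat 1)=0$ as a by-product. One point needs tightening: you justify the existence of a three-element length-two interval by citing the case analysis in the proof of Lemma~\ref{L:at most two}, but that analysis only bounds the size of an interval once a chain of the stated type is given, and your ``push a single $1$ down its column twice'' instance is left vague. The cleanest fix is the very structure the paper's induction exploits: for $n\ge 3$ the top of $PF_n$ is a chain, with unique coatom $(0,\dots,0,n,n-1)$ and unique corank-two element $(0,\dots,0,n,0,n-2)$ in one-line notation, so the interval from the latter to $\hat 1$ has exactly three elements; deleting the coatom from any maximal chain then yields a ridge of $\Delta(\widetilde{PF}_n)$ lying in exactly one facet, and the Danaraj--Klee dichotomy forces a ball. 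With that example supplied your write-up is complete; your side remark that non-Eulerianness ``already forbids the sphere'' is, after unwinding, the same free-ridge observation and can be dropped.
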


\begin{proof}

By the discussion above, it is enough to show that the reduced Euler characteristic 
of $\Delta(\widetilde{PF}_n)$ is 0.

By Hall's Theorem (see Chapter 3,~\cite{StanleyEC1}), we know that the reduced 
Euler characteristic of an order complex of a poset $P$ is equal to the 
value of the M\"obius function $\mu_{\widehat{P}}$ on the interval $[\hat{0},\hat{1}]$, 
where $\widehat{P}$ is $P$ with a $\hat{0}$ (a smallest element) and a $\hat{1}$ 
(a largest element) adjoined. Therefore, it is enough to show that 
$\mu_{{PF_n}}([\hat{0},\hat{1}])=0$, where $\hat{0}=(0,\dots,0)$ and $\hat{1} =(0,\dots, 0,n,n-1)$.

Let $PF_n^*$ denote the dual of $PF_n$. By abuse of notation we use $\hat{0}$ 
for the smallest element of $PF^*_n$ although it is $\hat{1}$ of $PF_n$. 
Similarly, we denote the largest element of $PF_n^*$ by $\hat{1}$. 
Now, since $\mu_{{PF_n}}([\hat{0},\hat{1}])=\mu_{{PF_n^*}}([\hat{0},\hat{1}])$, we 
are going to show that the later value is 0.

It is easy to see that the cardinality of the set 
$\{ x \in {PF_n^*}:\ \ell_{{PF_n^*}}([\hat{0},x]) \leq 3 \}$ is 1, 
for $n\geq 3$. Indeed, if $\ell_{{PF_n^*}}(x) = 3$, then in one-line notation 
$x = (0,\dots, 0,n,0,n-2)$, and $[\hat{0},x]=\{ \hat{0} < 0 < z_0 < x\}$, where 
$z_0 = (0,\dots, 0, n,n-1)$.

For simplicity, let us denote $\mu_{{PF_n^*}}$ by $\mu$, and denote
the length function $\ell_{{PF_n^*}}$ by $\ell$. 
We prove by induction that $\mu([\hat{0},z])=0$ for all $z$ with $\ell(z)>1$.
Our base case is when $\ell(z) = 2$. In this case, $[\hat{0},z]$ is a chain of length 2 
by the discussion in the previous paragraph, and hence, the corresponding value is 0. 
Now assume that $\mu ([\hat{0}, z]) = 0$ for 
all $z$ with $2\leq \ell(z) \leq s$, and let $z'\in {PF_n^*}$ be an element with 
$\ell(z') = s+1$. 
Since 
$$
\mu([\hat{0},z'])=-\sum_{\hat{0} \leq z<z'} \mu([\hat{0},z])=
- (\mu([\hat{0},\hat{0}])+\mu([\hat{0},0]) = -(1 + (-1)) = 0,
$$
the proof is complete. 

\end{proof}

\section{Length-generating functions}
\label{S:length generating function}

Recall that the {\em standard form} of an involution $\pi\in I_n$ is a 
product of transpositions of the form 
\begin{align}\label{A:product of transposition}
\pi=\left(i_1,j_1\right)\left(i_2,j_2\right)\cdots\left(i_m,j_m\right),
\end{align}
where for all $1\leqslant t\leqslant m$, $i_t<j_t$ and $i_1<i_2<\cdots<i_m$.
We call the transpositions appearing in (\ref{A:product of transposition}) 
as {\em arcs}.
Using bijection~(\ref{bijection}) from the Introduction section, we identify the elements 
of $PF_n$ as involutions in $S_n$. With this identification, let us denote by $I(n,k)$ 
the set of involutions of $S_n$ having $k$ arcs, and define its length generating function by 
$$
\mf i_q(n,k) :=\sum_{\pi\in I(n,k)}q^{\ell_{PF_n}(\pi)}. 
$$

Recall also that the $q$-analog of a natural number $n\in \N$ is the polynomial 
$[n]_q = 1+q+\cdots +q^{n-1}$.

\begin{Proposition}\label{recfl} 
For all $n\geq 2$ and $k\in \{2,\dots,n\}$, we have 
$$
\mathfrak i_q(n+1,k)=q^n\mathfrak i_q(n,k)+[n]_q\mathfrak i_q(n-1,k-1)\,.
$$
\end{Proposition}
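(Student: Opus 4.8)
The plan is to prove the recurrence by peeling off the index $1$ from an involution $\pi\in I(n+1,k)$ and tracking how the statistic $\ell_{PF_{n+1}}$ behaves under the bijection $\phi$ of~(\ref{bijection}). The tool for this is an explicit description of the length function: by Subsection~\ref{S:combinatorial approach}, $\ell_{PF_n}(x)=\rho_<(x)$ for $x\in PF_n$, that is, $\ell_{PF_n}(x)$ counts the pairs $(a,b)$ with $1\le a<b\le n$ and $Rk(x)_{a,b}=Rk(x)_{a-1,b-1}$. For any partial permutation matrix with one-line notation $(x_1,\dots,x_n)$ one checks directly (with the convention $Rk(x)_{0,j}=0$) that $Rk(x)_{a,b}-Rk(x)_{a-1,b-1}$ equals ($1$ if $1\le x_a\le b$, else $0$) plus ($1$ if the unique $1$ in column $b$ of $x$ lies in a row $\le a-1$, else $0$). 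Hence a pair $(a,b)$ with $a<b$ is counted by $\rho_<(x)$ exactly when $x_a=0$ or $x_a>b$, and, simultaneously, column $b$ of $x$ carries no $1$ in the rows $1,\dots,a-1$. When $x\in PF_n$ is the matrix of a symmetric partial matching of $[n]$, this becomes: $(a,b)$ is counted iff ($a$ is unmatched or its partner exceeds $b$) and ($b$ is unmatched or its partner is $\ge a$). I shall also use that whether a pair is counted is unaffected by relabeling the ground set through an order-preserving bijection.

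Fix $\pi\in I(n+1,k)$ and put $x=\phi^{-1}(\pi)\in PF_{n+1}$. Suppose first that $\pi(1)=1$. Then row and column $1$ of $x$ vanish; as $1$ is unmatched, every one of the $n$ pairs $(1,b)$ with $2\le b\le n+1$ is counted, whereas the pairs $(a,b)$ with $2\le a<b\le n+1$ are governed only by the restriction of $x$ to $\{2,\dots,n+1\}$. Relabeling that restriction order-isomorphically onto $[n]$ yields $\phi^{-1}(\pi')$ for a unique $\pi'\in I(n,k)$, and the criterion above gives $\ell_{PF_{n+1}}(\pi)=n+\ell_{PF_n}(\pi')$. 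Since $\pi\mapsto\pi'$ is a bijection from $\{\pi\in I(n+1,k):\pi(1)=1\}$ onto $I(n,k)$, the involutions fixing $1$ contribute exactly $q^{n}\,\mathfrak i_q(n,k)$ to $\mathfrak i_q(n+1,k)$.

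Suppose instead that $\pi(1)=j$ with $2\le j\le n+1$, so $x$ has the arc $(1,j)$. Among the pairs $(1,b)$ with $2\le b\le n+1$, precisely those with $b\le j-1$ are counted (the partner $j$ of $1$ must exceed $b$), giving $j-2$ of them; no pair $(a,j)$ with $2\le a<j$ is counted (the partner $1$ of $j$ is not $\ge a$); no pair $(j,b)$ with $j<b\le n+1$ is counted (the partner $1$ of $j$ does not exceed $b$); and the pairs avoiding $\{1,j\}$ are governed only by the restriction of $x$ to $\{2,\dots,n+1\}\setminus\{j\}$. Relabeling that restriction order-isomorphically onto $[n-1]$ yields $\phi^{-1}(\pi'')$ for a unique $\pi''\in I(n-1,k-1)$, and the criterion gives $\ell_{PF_{n+1}}(\pi)=(j-2)+\ell_{PF_{n-1}}(\pi'')$. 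Because $\pi\mapsto(\pi(1),\pi'')$ is a bijection from $\{\pi\in I(n+1,k):\pi(1)\ne1\}$ onto $\{2,\dots,n+1\}\times I(n-1,k-1)$, these involutions contribute $\bigl(\sum_{j=2}^{n+1}q^{\,j-2}\bigr)\mathfrak i_q(n-1,k-1)=[n]_q\,\mathfrak i_q(n-1,k-1)$. Adding the two contributions gives the asserted recurrence; in the degenerate cases where $2k>n+1$ (or $>n$, or $>n-1$) the relevant polynomials vanish, so the identity still holds.

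The only steps I expect to write out in detail are the passage from the rank-control description of $\rho_<$ to the ``matching'' criterion, and the short case analysis above of which pairs meeting $\{1,j\}$ survive; neither poses a genuine difficulty. The one point to handle cleanly is the invariance of the $\rho_<$-count under order-preserving relabeling of the ground set, which is what makes the reductions to $PF_n$ and $PF_{n-1}$ legitimate.
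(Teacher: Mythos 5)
Your proposal is correct and follows essentially the same route as the paper: you peel off the index $1$, splitting $I(n+1,k)$ according to whether $\pi(1)=1$ or $\pi(1)=j\ge 2$ (the paper's bijection $\Phi$), and you establish the same length shifts $\ell_{PF_{n+1}}(\pi)=\ell_{PF_n}(\pi')+n$ and $\ell_{PF_{n+1}}(\pi)=\ell_{PF_{n-1}}(\pi'')+(j-2)$ before summing to get the recurrence. The only difference is bookkeeping: you verify the shifts via a direct pairwise criterion for $\rho_<$ in terms of the partial matching, while the paper counts equalities in the rank-control matrix and invokes its normalization formula for $\ell_{PF_n}$ -- the content is the same.
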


\begin{proof}
We begin with defining a bijection:
$$
\Phi\,:\,I(n+1,k)\rightarrow I(n,k)\bigcup \left(\{2,3,\dots,n,n+1\}\times I(n-1,k-1)\right)\,.
$$
Let $\pi$ be an element of $I(n+1,k)$. If $\pi(1)=1$, then we define 
$\Phi(\pi)=\sigma\in I(n,k)$ as follows: $\sigma(j)=\pi(j+1)$ for $j\in\{1,2,\dots,n\}$. 
In other words, in matrix notation, $\sigma$ is obtained from $\pi$
by deleting its first row and its first column. 
Notice that if $\pi(1)=1$, then $\ell_{PF_{n+1}}(\pi)=\ell_{PF_n}(\sigma)+n$,  
since when we delete the first zero row from the rank-control matrix, the 
parameter $\rho_<$ decreases by $n$, which is the number of zeros in this 
row in positions from 2 to $n+1$.

Suppose now that $\pi(1)=i\in\{2,3,\dots,n+1\}$. In this case,  
we define $\Phi(\pi)$ to be the pair $\Phi(\pi)=(i,\sigma)$, where 
$\sigma$ is the involution from $I(n-1,k-1)$ defined by
$$
\sigma(j)=
\begin{cases}
\pi(j+1) & \text{ if }  j\in\{1,..,i-2\}, \\
\pi(j+2) & \text{ if }  j\in\{i-1,..,n-1\}.
\end{cases}
$$
In matrix notation, $\sigma$ is obtained from $\pi$ by deleting the first 
and the $i$-th rows of $\pi$, as well as deleting its first and $i$-th columns.
In this case we have: $\ell_{PF_{n+1}}(\pi)=\ell_{PF_{n-1}}(\sigma)+i-2$. 
To see this, notice that all the equalities in the upper triangular 
portion of $Rk(\pi)$ are carried into that of $Rk(\sigma )$ with 
additional $i-2$ equalities arising from the 0's at the positions $(1,2)$, $(1,3)$,\dots,$(1,i-1)$ 
of $\pi$. Thus $\rho_< (\pi) = \rho_<(\sigma) + i-1$. On the other hand, since 
the ranks of $\pi$ and $\sigma$ differ by 2, and their sizes differ by 2, 
by the formula (\ref{A:a formula for ell}), we see that 
\begin{align}
\ell_{PF_{n+1}}(\pi) &=\rho_<(\pi)-\frac{2(n+1)-rk(\pi)}{2} \notag \\
&=\rho_<(\sigma)+i-1-\frac{2(n-1)-rk(\sigma)+2}{2} \notag \\
&=\ell_{PF_{n-1}}(\sigma)+i-2. \label{A:an example}
\end{align}
See Example~\ref{E:an example} for an illustration.

Now, in the light of these observations, we derive the desired recurrence: 
\begin{align*}
\mathfrak i_q(n+1,k) &=\sum_{\pi\in I(n+1,k)}q^{\ell_{PF_{n+1}}(\pi)} \\
&=\sum_{\pi\in I(n+1,k),\pi(1)=1}q^{\ell_{PF_{n+1}}(\pi)}+
\sum_{\pi\in I(n+1,k),\pi(1)\neq 1}q^{\ell_{PF_{n+1}}(\pi)}\\
&=\sum_{\sigma\in I(n,k)}q^{\ell_{PF_n}(\sigma)+n}
+\sum_{i=2}^{n+1}\sum_{\sigma\in I(n-1,k-1)}q^{\ell_{PF_{n-1}}(\sigma)+i-2}\\
&=q^n\cdot\sum_{\sigma\in I(n,k)}q^{\ell_{PF_n}(\sigma)}
+\sum_{i=2}^{n+1}q^{i-2}\cdot\sum_{\sigma\in I(n-1,k-1)}q^{\ell_{PF_{n-1}}(\sigma)}\\
&=q^n\mathfrak i_q(n,k)+(1+q+q^2+\cdots +q^{n-1})\mathfrak i_q(n-1,k-1)\\
&=q^n\mathfrak i_q(n,k)+[n]_q\mathfrak i_q(n-1,k-1).
\end{align*}
\end{proof}

\begin{Example}\label{E:an example}
Let us consider an example in order to understand (\ref{A:an example}). Consider 
 $$
 \pi=\begin{pmatrix}
 0 &0 &0 &0 &1 &0\\
 0 &0 &0 &0 &0 &0\\
 0 &0 &0 &0 &0 &0\\
 0 &0 &0 &0 &0 &1\\
 1 &0 &0 &0 &0 &0\\
 0 &0 &0 &1 &0 &0
 \end{pmatrix}\,\,\,\textrm{with}\,\,\,
 Rk(\pi)=\begin{pmatrix}
 0 &0 &0 &0 &1 &1\\
 0 &0 &0 &0 &1 &1\\
 0 &0 &0 &0 &1 &1\\
 0 &0 &0 &0 &1 &2\\
 1 &1 &1 &1 &2 &3\\
 1 &1 &1 &2 &3 &4
 \end{pmatrix}.
 $$
 $$
 \sigma=\Phi(\pi)=\begin{pmatrix}
 0 &0 &0 &0\\
 0 &0 &0 &0\\
 0 &0 &0 &1\\
 0 &0 &1 &0\end{pmatrix}\,\,\,\textrm{with}\,\,\,
 Rk(\sigma)=\begin{pmatrix}
 0 &0 &0 &0\\
 0 &0 &0 &0\\
 0 &0 &0 &1\\
 0 &0 &1 &2
 \end{pmatrix}.
 $$
By using (\ref{A:a formula for ell}), it is easy to verify 
$\ell_{PF_6}(\pi) = 12- 4=8$ and $\ell_{PF_4}(\sigma)= 8 - 3=5$.
 
\end{Example}

\subsection{An explicit formula for $\mathfrak i_q(n,k)$.}

Let $\pi \in PF_n$ be a partial involution and let
$\pi=\left(i_1,j_1\right)\left(i_2,j_2\right)\cdots\left(i_m,j_m\right)$ denote its standard form
viewed as an involution in $I_n$ via bijection (\ref{bijection}).
It follows from the proof of Proposition~6.2 of~\cite{Cherniavsky11} that 
the following equality is true:
\begin{equation}\label{LengthFormula}
\ell_{PF_n}(\pi)=\rho_<(\pi)=\widetilde{inv}(\pi)+\sum_{a\,:\,\pi(a)=a}(n-a)\,,
\end{equation}
where $\widetilde{inv}(\pi)$ is the ``modified inversion number,'' which is equal to 
the number of inversions in the word $i_1j_1i_2j_2\cdots i_mj_m$.

\begin{Proposition}
$\mathfrak i_q(2k,k)=[2k-1]_q!!$.
\end{Proposition}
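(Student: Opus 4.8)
The plan is to induct on $k$, using the recurrence of Proposition~\ref{recfl} together with the observation that one of its two terms vanishes at the indices of interest. First I would note that an involution having $k$ arcs moves exactly $2k$ distinct points, so it cannot be an element of $S_{2k-1}$; that is, $I(2k-1,k)=\emptyset$, whence $\mathfrak i_q(2k-1,k)=0$.

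Next, for $k\geq 2$ the index $k$ lies in the admissible range $\{2,\dots,2k-1\}$, so I would apply Proposition~\ref{recfl} with $n=2k-1$ to obtain
$$
\mathfrak i_q(2k,k)=q^{2k-1}\mathfrak i_q(2k-1,k)+[2k-1]_q\,\mathfrak i_q(2k-2,k-1)=[2k-1]_q\,\mathfrak i_q(2k-2,k-1).
$$
For the base case $k=1$ I would observe that $I(2,1)=\{(1,2)\}$, whose image under the bijection (\ref{bijection}) is the minimal element of $PF_2$; hence $\ell_{PF_2}((1,2))=0$ and $\mathfrak i_q(2,1)=1=[1]_q!!$. (This also follows from (\ref{LengthFormula}): the element $(1,2)$ is fixed-point-free and the word $12$ has no inversions.) The induction then closes immediately: assuming $\mathfrak i_q(2(k-1),k-1)=[2k-3]_q!!$, the displayed identity gives $\mathfrak i_q(2k,k)=[2k-1]_q\,[2k-3]_q!!=[2k-1]_q!!$.

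I do not expect a serious obstacle here, since the real work is already contained in Proposition~\ref{recfl}; the only points demanding care are verifying the admissible range for $k$ when invoking the recurrence with $n=2k-1$ and confirming the vanishing $\mathfrak i_q(2k-1,k)=0$, both handled above. As an alternative route I could argue directly from (\ref{LengthFormula}): since $i_1=1$ for any fixed-point-free involution on $[2k]$, writing $\pi=(1,j)(i_2,j_2)\cdots$ one has $\widetilde{inv}(\pi)=(j-2)+\widetilde{inv}(\pi')$, where $\pi'$ is the matching induced on the remaining $2k-2$ points after order-preserving relabeling, so that summing $q^{j-2}$ over $j\in\{2,\dots,2k\}$ produces exactly the factor $[2k-1]_q$; but passing through the already-established recurrence is shorter and cleaner.
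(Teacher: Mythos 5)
Your proof is correct and follows essentially the same route as the paper's: apply the recurrence of Proposition~\ref{recfl} with $n=2k-1$, observe that $\mathfrak i_q(2k-1,k)=0$ because no involution in $S_{2k-1}$ can have $k$ arcs, and close by induction on $k$. Your added care about the base case $k=1$ and the admissible range of the recurrence is sound (the paper glosses over both), so no further changes are needed.
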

\begin{proof} By Proposition~\ref{recfl} we have
$$
\mathfrak i_q(2k,k)=q^{2n-1}\mathfrak i_q(2k-1,k)+[2k-1]_q\mathfrak i_q(2k-2,k-1).
$$
Since there are no involutions in $S_{2k-1}$ which have $k$ arcs 
(the maximal number of arcs for an involution in $S_{2k-1}$ is $k-1$), 
we have $ \mathfrak i_q(2k-1,k)=0$ and therefore 
$\mathfrak i_q(2k,k)=[2k-1]_q\mathfrak i_q(2k-2,k-1)=[2k-1]_q\mathfrak i_q\left(2(k-1),k-1\right)$. 
Now, by induction we get $\mathfrak i_q(2k,k)=[2k-1]_q!!$. 
\end{proof}

\begin{Proposition}
$$
\mathfrak i_q(n,k)=q^{{{n-2k}\choose 2}}\cdot{n\choose{2k}}_q\cdot [2k-1]_q!!,
$$
where ${n\choose{2k}}_q=\frac{[n]_q!}{[2k]_q![n-2k]_q!}$.
\end{Proposition}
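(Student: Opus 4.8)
The plan is to establish the closed form by induction on $n$, using the recurrence of Proposition~\ref{recfl} for the inductive step once $k\ge 2$, after disposing of the small values of $k$ by a direct count. Write
$$g(n,k):=q^{\binom{n-2k}{2}}\binom{n}{2k}_q[2k-1]_q!!$$
for the claimed right-hand side; the goal is $\mathfrak{i}_q(n,k)=g(n,k)$ for all $n\ge 1$ and all $k\ge 0$. When $2k>n$ there is no involution in $S_n$ with $k$ arcs, so $\mathfrak{i}_q(n,k)=0$, while also $\binom{n}{2k}_q=0$, so $g(n,k)=0$; this settles the boundary cases and lets the induction bottom out.

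For the base cases $k\in\{0,1\}$ I would invoke the explicit length formula~(\ref{LengthFormula}). For $k=0$ the only element of $I(n,0)$ is the all-zero matrix, whose associated involution is the identity; it has $\widetilde{inv}=0$ and all of $[n]$ as fixed points, so $\ell_{PF_n}=\sum_{a=1}^n(n-a)=\binom n2$, giving $\mathfrak{i}_q(n,0)=q^{\binom n2}=g(n,0)$ (here $\binom n0_q=1$ and the empty double factorial is $1$). For $k=1$ the elements of $I(n,1)$ are the transpositions $(a,b)$ with $1\le a<b\le n$; each has $\widetilde{inv}=0$ and fixed-point set $[n]\setminus\{a,b\}$, hence $\ell_{PF_n}((a,b))=\binom n2-2n+a+b$, and therefore
$$\mathfrak{i}_q(n,1)=q^{\binom n2-2n}\sum_{1\le a<b\le n}q^{a+b}=q^{\binom n2-2n+3}\binom n2_q=q^{\binom{n-2}{2}}\binom n2_q=g(n,1),$$
using the standard evaluation $\sum_{1\le a<b\le n}q^{a+b}=q^{3}\binom n2_q$ and the elementary identity $\binom n2-2n+3=\binom{n-2}{2}$.

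For $k\ge 2$ with $2k\le n$ (so $n\ge 4$ and $2\le k\le n-1$), Proposition~\ref{recfl} applied with its parameter $n$ replaced by $n-1$ gives
$$\mathfrak{i}_q(n,k)=q^{\,n-1}\,\mathfrak{i}_q(n-1,k)+[n-1]_q\,\mathfrak{i}_q(n-2,k-1),$$
and since both first arguments on the right are smaller than $n$, the induction hypothesis turns this into $q^{\,n-1}g(n-1,k)+[n-1]_q\,g(n-2,k-1)$. So the whole matter reduces to the purely algebraic identity
$$g(m+1,k)=q^{\,m}\,g(m,k)+[m]_q\,g(m-1,k-1),\qquad m:=n-1.$$
To check it I would substitute the $q$-Pascal relation $\binom{m+1}{2k}_q=q^{2k}\binom{m}{2k}_q+\binom{m}{2k-1}_q$ into $g(m+1,k)$. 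The resulting $q^{2k}\binom{m}{2k}_q$-term becomes exactly $q^{\,m}g(m,k)$ after the exponent bookkeeping $\binom{m+1-2k}{2}+2k=\binom{m-2k}{2}+m$, and the $\binom{m}{2k-1}_q$-term becomes $[m]_q\,g(m-1,k-1)$ after writing $[2k-1]_q!!=[2k-1]_q\,[2k-3]_q!!$, applying the one-line $q$-binomial identity $\binom{m}{2k-1}_q[2k-1]_q=[m]_q\binom{m-1}{2k-2}_q$, and observing $\binom{m+1-2k}{2}=\binom{(m-1)-2(k-1)}{2}$.

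Since every use of the recurrence strictly decreases $n$, induction on $n$ then completes the proof. The only part that needs any genuine care is matching the powers of $q$ on the two sides of the algebraic identity; the rest is routine manipulation of $q$-factorials. (One could instead take $\mathfrak{i}_q(2k,k)=[2k-1]_q!!$ from the previous Proposition as the starting value in the $n$-direction for each fixed $k$, but that value is in fact recovered automatically from the $k=1$ base case by iterating the recurrence, so it need not be assumed.)
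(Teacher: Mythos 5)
Your proof is correct, but it follows a genuinely different route from the paper. You argue by induction on $n$: you verify the cases $k=0,1$ directly from the length formula~(\ref{LengthFormula}) (correctly getting $q^{\binom{n}{2}}$ and $q^{\binom{n-2}{2}}{n\choose 2}_q$), and for $k\geq 2$ you feed the recurrence of Proposition~\ref{recfl} into the inductive hypothesis and reduce everything to the algebraic identity $g(m+1,k)=q^m g(m,k)+[m]_q g(m-1,k-1)$, which indeed follows from the $q$-Pascal rule ${m+1\choose 2k}_q=q^{2k}{m\choose 2k}_q+{m\choose 2k-1}_q$ together with ${m\choose 2k-1}_q[2k-1]_q=[m]_q{m-1\choose 2k-2}_q$ and the exponent check $\binom{m+1-2k}{2}+2k=\binom{m-2k}{2}+m$; all of these are right, and you are careful to stay inside the stated range $k\geq 2$ of the recurrence by treating $k=0,1$ separately. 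The paper instead gives a direct, non-inductive computation: it maps each $\pi\in I(n,k)$ bijectively to the pair consisting of its fixed-point set and a fixed-point-free involution $\sigma\in I(2k,k)$, splits the length as $\sum_t(n-i_t)+\ell_{F_{2k}}(\sigma)$ via~(\ref{LengthFormula}), evaluates the sum over fixed-point sets with the Gaussian identity~(\ref{Gaussian}) to produce the factor $q^{\binom{n-2k}{2}}{n\choose 2k}_q$, and quotes $\mathfrak{i}_q(2k,k)=[2k-1]_q!!$ from the preceding proposition. Your approach buys a proof that needs only the recurrence and routine $q$-binomial manipulation (no $q$-binomial theorem), while the paper's approach buys a structural explanation of the product form of the answer—each factor has a combinatorial meaning—and reuses the previously computed top-rank case rather than recovering it inductively.
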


\begin{proof}
Let $\pi$ an element from $I(n,k)$. The involution $\pi\in S_n$ has $k$ arcs, 
hence, it has $n-2k$ fixed points. Thus, $n-2k$ zero rows and columns 
in the corresponding partial fixed-point-free involution matrix. 
So, there is a natural bijection
$$
\pi\leftrightarrow \left(\{i_1,\dots,i_{n-2k}\},\ \sigma\right),
$$
where $1\leqslant i_1<i_2<\cdots <i_{n-2k}\leqslant n$ are the fixed points 
of $\pi$ and $\sigma\in I(2k,k)$ is the fixed point free involution of $S_{2k}$, 
whose partial fixed-point-free involution matrix is obtained from $\pi$ 
by deleting zero rows and columns. Now, using formula~(\ref{LengthFormula}) 
we have
\begin{align}
\mathfrak i_q(n,k) &=\sum_{\pi\in I(n,k)}q^{\ell_{PF_n}(\pi)} \notag \\
&=\sum_{\stackrel{(\{i_1,\dots,i_{n-2k}\}\,\ \sigma):}
{1\leqslant i_1<i_2<\cdots <i_{n-2k}\leqslant n,\sigma\in I(2k,k)}}
q^{n-i_1+n-i_2+\cdots+n-i_{n-2k}+\ell_{F_{2k}}(\sigma)} \notag \\
&= \left( \sum_{1\leqslant i_1<i_2<\cdots <i_{n-2k}\leqslant n}
q^{n-i_1+\cdots+n-i_{n-2k}} \right)  \cdot 
\left(\sum_{\sigma\in I(2k,k)}q^{\ell_{F_{2k}}(\sigma)}\right) \notag \\
&= \left( \sum_{0\leqslant j_1<\cdots <j_{n-2k}\leqslant n-1}
q^{j_1+\cdots+j_{n-2k}} \right) \cdot \mathfrak i_q(2k,k). \label{sum over distinct}
\end{align}
To simplify (\ref{sum over distinct}), we use well known Gaussian identity 
(see \cite{StanleyEC1}, formula~(1.87)):
\begin{align}\label{Gaussian}
\prod_{i=0}^{j-1} (1+x q^i) = \sum_{k=0}^j x^k q^{k \choose 2} { j \choose k}_q,
\end{align}
which is equivalent, by expanding the product, to 
\begin{align}\label{formula follows}
\sum_{0\leqslant s_1<s_2<\cdots <s_{k}\leqslant j-1}q^{\sum_{r=1}^k s_r} x^k=
\sum_{k=0}^j x^k q^{k \choose 2} { j \choose k}_q.
\end{align}
Replacing $j$ by $n$ ,and comparing the coefficients of $x^{n-2k}$ in (\ref{formula follows}), 
we obtain our desired formula
\begin{align*}
\mathfrak i_q(n,k) =q^{{{n-2k}\choose 2}}\cdot{n\choose{2k}}_q\cdot [2k-1]_q!!.
\end{align*}
\end{proof}

\subsection{Length generating function of $PF_n$}

Next, we look at the length generating function of $PF_n$ more closely.
$$
\mathfrak p_q(n) :=\sum_{\pi\in PF_n}q^{\ell_{PF_n}(\pi)}
=\sum_{k=0}^{\lfloor \frac{n}{2}\rfloor} \mathfrak i_q(n,k)\,.
$$
By a straightforward calculation we see that 
$\mathfrak p_q(1)=1$, $\mathfrak p_q(2)=1+q$, $\mathfrak p_q(3)=1+q+q^2+q^3$.

\begin{Proposition}
For all $n\geq 2$, we have 
$$
\mathfrak p_q(n+1)=q^n\mathfrak p_q(n)+[n]_q\mathfrak p_q(n-1).
$$
\end{Proposition}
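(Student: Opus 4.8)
The statement to be proved is the recurrence
$$
\mathfrak p_q(n+1)=q^n\mathfrak p_q(n)+[n]_q\mathfrak p_q(n-1),\qquad n\geq 2.
$$
The natural approach is to sum the recurrence of Proposition~\ref{recfl} over all admissible numbers of arcs $k$. Recall that $\mathfrak p_q(m)=\sum_{k}\mathfrak i_q(m,k)$, where $k$ ranges over $\{0,1,\dots,\lfloor m/2\rfloor\}$. Proposition~\ref{recfl} gives, for $2\le k\le n$,
$$
\mathfrak i_q(n+1,k)=q^n\mathfrak i_q(n,k)+[n]_q\mathfrak i_q(n-1,k-1),
$$
so summing over this range of $k$ already produces the two terms on the right-hand side, up to boundary corrections coming from the values $k=0$ and $k=1$ (and the top value of $k$). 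The plan is therefore: (i) write $\mathfrak p_q(n+1)$ as a sum over $k$, (ii) substitute the Proposition~\ref{recfl} recurrence in the range where it is valid, (iii) reindex the $[n]_q\mathfrak i_q(n-1,k-1)$ piece as $[n]_q\sum_{k'}\mathfrak i_q(n-1,k')$, and (iv) check that the leftover low-$k$ terms on both sides match.

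\textbf{The boundary bookkeeping.} The main thing to verify carefully is that the small-$k$ contributions balance. For $k=0$ we have $\mathfrak i_q(m,0)=1$ for every $m$ (the empty involution, i.e.\ the zero matrix, with $\ell_{PF_m}$ of the minimal element, which is $0$ by construction), so the $k=0$ term contributes $1$ to $\mathfrak p_q(n+1)$ and must be matched by $q^n\cdot 1$ from the $k=0$ term of $\mathfrak p_q(n)$ plus... — and here one sees that the naive identity $1 = q^n\cdot 1$ fails, which signals that the correct matching is not term-by-term but uses the $k=1$ terms as well. Concretely, I expect the clean way to organize this is to observe that $\mathfrak i_q(m,1)$ has an explicit small form (a single arc $(i,j)$ in $S_m$, contributing $q^{\widetilde{inv}+\sum(m-a)}$ summed over positions), and that the recurrence of Proposition~\ref{recfl} actually does extend to $k=1$ if one interprets $\mathfrak i_q(n-1,0)=1$ correctly; then summing over all $k\geq 0$ works uniformly provided one also checks the top end, namely that $\mathfrak i_q(n+1,k)=0$ and $\mathfrak i_q(n,k)=0$ for $k>\lfloor (n+1)/2\rfloor$ so no terms are lost. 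An alternative, cleaner route avoiding any case analysis at all is to mimic the proof of Proposition~\ref{recfl} directly: define the bijection
$$
\Phi:PF_{n+1}\longrightarrow PF_n\ \bigsqcup\ \bigl(\{2,3,\dots,n+1\}\times PF_{n-1}\bigr)
$$
sending $\pi$ with $\pi(1)=1$ to the involution obtained by deleting the first row and column (landing in $PF_n$, with $\ell_{PF_{n+1}}(\pi)=\ell_{PF_n}(\sigma)+n$), and $\pi$ with $\pi(1)=i\ge 2$ to the pair $(i,\sigma)$ where $\sigma\in PF_{n-1}$ is obtained by deleting rows/columns $1$ and $i$ (with $\ell_{PF_{n+1}}(\pi)=\ell_{PF_{n-1}}(\sigma)+i-2$). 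This is exactly the bijection of Proposition~\ref{recfl} with the arc count forgotten, so all the length computations there apply verbatim.

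\textbf{Assembling the generating function.} Using $\Phi$, we get
$$
\mathfrak p_q(n+1)=\sum_{\pi:\pi(1)=1}q^{\ell_{PF_{n+1}}(\pi)}+\sum_{i=2}^{n+1}\sum_{\sigma\in PF_{n-1}}q^{\ell_{PF_{n-1}}(\sigma)+i-2}=q^n\mathfrak p_q(n)+\Bigl(\sum_{i=2}^{n+1}q^{i-2}\Bigr)\mathfrak p_q(n-1),
$$
and since $\sum_{i=2}^{n+1}q^{i-2}=1+q+\cdots+q^{n-1}=[n]_q$, the recurrence follows immediately. The only genuine content to check — and the step I expect to require the most care — is that the length formulas $\ell_{PF_{n+1}}(\pi)=\ell_{PF_n}(\sigma)+n$ and $\ell_{PF_{n+1}}(\pi)=\ell_{PF_{n-1}}(\sigma)+i-2$ hold for \emph{all} $\pi\in PF_{n+1}$, not merely for those with a prescribed number of arcs; but these identities are statements about rank-control matrices (deleting the first zero row drops $\rho_<$ by $n$; deleting rows/columns $1$ and $i$ drops $\rho_<$ by $i-1$ while $2n+2-\mathrm{rk}$ drops by $2$) and the derivations in the proof of Proposition~\ref{recfl}, including the displayed computation~(\ref{A:an example}), never actually used the arc count. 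So the argument transfers with essentially no new work; I would simply restate it in the uniform setting and conclude.
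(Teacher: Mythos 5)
Your closing bijective argument (the map $\Phi$ of Proposition~\ref{recfl} with the arc count forgotten, together with the two length identities $\ell_{PF_{n+1}}(\pi)=\ell_{PF_n}(\sigma)+n$ and $\ell_{PF_{n+1}}(\pi)=\ell_{PF_{n-1}}(\sigma)+i-2$, which indeed never use the arc count) is correct and is essentially the paper's own proof, which simply sums the recurrence of Proposition~\ref{recfl} over all $k$. One factual slip in your preliminary bookkeeping: the zero matrix is the \emph{maximal} element of $PF_m$, of length $\binom{m}{2}$ (the minimum is the matrix of largest rank), so $\mathfrak{i}_q(m,0)=q^{\binom{m}{2}}$ rather than $1$; with this value the term-by-term summation works fine (at $k=0$ one needs $q^{\binom{n+1}{2}}=q^{n}q^{\binom{n}{2}}$, which holds), so the boundary obstruction you describe is illusory --- harmless in the end, since your final argument never relies on that claim.
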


\begin{proof}
Follows from Proposition~\ref{recfl}.
\end{proof}

\begin{Example}
It is easy to verify the following calculation from the Hasse diagram of 
$PF_4$ in Figure~\ref{F:Labeling of PF4}:
$\mathfrak p_q(4)=q^3\mathfrak p_q(3)+[3]_q\mathfrak p_q(2)=1+2q+2q^2+2q^3+q^4+q^5+q^6$.
\end{Example}

\subsection{Skew-symmetric matrices over $\F_q$}

There is an interesting similarity between the rank generating function $\mathfrak i_q(n,k)$ 
and the number of $\F_q$-rational points of rank $2k$, $n\times n$ skew symmetric matrices,
which we denote by $\mt{Skew}_n^{2k}$. 
Here $\F_q$ is the finite field with $q$ elements. 
It is well known that the number of $\F_q$-rational points of the general 
linear group $\mt{GL}_n$ and the symplectic group 
$\mt{Sp}_{n}$ ($n=2m$) are given by 
$$
| \mt{GL}_n |_{\F_q}   = q^{{n\choose 2}}  \prod^n_{i=1} (q^{i}-1) \mt{ and }\
|\mt{Sp}_{2m} |_{\F_q}  = q^{m^2} \prod^m_{i=1} (q^{2i}-1).
$$
The group $G= \mt{GL}_n$ acts $\mt{Skew}_n^{2k}$ transitively. 
A simple matrix computation shows that 
$$
|G_x |_{\F_q} = | \mt{GL}_{n-2k} |_{\F_q}  |\mt{Sp}_{2k} |_{\F_q} |\mt{Mat}_{n-2k,2k} |_{\F_q},
$$
where $\mt{Mat}_{n-2k,2k}$ is the space of $2k \times (n-2k)$ matrices. 
Thus, 
\begin{align*}
|\mt{Skew}_n^{2k}|_{\F_q} = |G/G_x |_{\F_q} &= 
\frac{ q^{{n\choose 2}}  \prod^n_{i=1} (q^{i}-1) }{ q^{k^2} \prod^k_{i=1} (q^{2i}-1) 
q^{{n-2k\choose 2}}  \prod^{n-2k}_{i=1} (q^{i}-1) q^{2k(n-2k)}},
\end{align*}
which simplifies as follows
\begin{align*}
|\mt{Skew}_n^{2k}|_{\F_q} &= q^{{n\choose 2} - k^2- {n-2k\choose 2} -2k(n-2k)}  \frac{ [n]! (q-1)^n }{ 
(\prod^k_{i=1} [2i] ) (q-1)^k [n-2k]!  (q-1)^{n-2k} } \\
&= q^{2{k\choose 2} } \frac{ [n]! (q-1)^k}{ (\prod^k_{i=1} [2i] ) [n-2k]! } \\
&= q^{2{k\choose 2} } (q-1)^k {n \choose 2k}_q  [2k-1]!! \,.
\end{align*}
In other words, 
\begin{align*}\label{Perplexing}
|\mt{Skew}_n^{2k}|_{\F_q}  = \mathfrak i_q(n,k) q^{2{k \choose 2} -{{n-2k}\choose 2}} (q-1)^k \, .
\end{align*}

\bibliography{PFPF}
\bibliographystyle{plain}

\end{document}